\theoremstyle{plain}
\newtheorem{thm}{Theorem}[section]
\newtheorem{lma}[thm]{Lemma}
\newtheorem{cor}[thm]{Corollary}
\theoremstyle{definition}
\theoremstyle{remark}
\newtheorem{remark}{Remark}[section]
\newcommand{\Prob}{\operatorname{P}}
\newcommand{\tr}{\operatorname{tr}}
\newcommand{\diag}{\operatorname{diag}}
\newcommand{\Oo}{\mathcal{O}}
\newcommand{\w}{w}	
\newcommand{\AQD}{\frac{h}{2\pi} \sum_{r=1}^{D} \frac{1}{r} H_K \Big(U_{r}^Q \otimes \Big( Z_{r}^Q
-\sqrt{\frac{2}{h}} \Delta \w_h^Q \Big)
 -\Big(Z_{r}^Q - \sqrt{\frac{2}{h}} \Delta \w_h^Q \Big) \otimes U_r^Q \Big)}
\newcommand{\ADV}{\frac{h}{2\pi} \sum_{r=1}^{D} \frac{1}{r} H_K \Big(U_{r} \otimes \Big( Z_{r}
-\sqrt{2} V \Big)
 -\Big(Z_{r} - \sqrt{2} V \Big) \otimes U_r \Big)}
\newcommand{\AQ}{\frac{h}{2\pi} \sum_{r=1}^{\infty} \frac{1}{r} H_K \Big(U_{r}^Q \otimes \Big(Z_{r}^Q
 -\sqrt{\frac{2}{h}} \Delta \w_h^Q \Big)
 -\Big(Z_{r}^Q - \sqrt{\frac{2}{h}} \Delta \w_h^Q \Big) \otimes U_r^Q \Big)}
\newcommand{\RQ}{\frac{h}{2\pi} \sum_{r=D+1}^{\infty} \frac{1}{r} H_K \Big( U_{r}^Q \otimes \Big( Z_{r}^Q
-\sqrt{\frac{2}{h}} \Delta \w_h^Q \Big)
 -\Big(Z_{r}^Q - \sqrt{\frac{2}{h}} \Delta \w_h^Q \Big) \otimes U_r^Q \Big)}
\newcommand{\VQr}{U_{r}^Q \otimes \Big( Z_{r}^Q
-\sqrt{\frac{2}{h}} \Delta \w_h^Q \Big)
 -\Big(Z_{r}^Q - \sqrt{\frac{2}{h}} \Delta \w_h^Q \Big) \otimes U_r^Q }
 \newcommand{\SinftyQ}{{\Sigma^Q_{\infty}}}
 \newcommand{\SigmaDQ}{\Sigma^{Q,(D)}}
 \newcommand{\SigmaCondQ}{\Sigma^Q(V_r^Q)_{|Z_r^Q, \Delta\w_h^Q}}
 \newcommand{\Sinfty}{\Sigma_{\infty}^I}
\newcommand{\SigmaQCond}{\Sigma^Q(V_r^Q)_{|Z_r^Q, \Delta\w_h^Q}}
\newcommand{\SigmaICond}{\Sigma^I(V_r^I)_{|Z_r^I, \Delta\w_h^I}}
\newcommand{\SQinfty}{\Sigma^Q_{\infty}}
\title{{Iterated Stochastic Integrals in Infinite Dimensions -- Approximation
and Error Estimates}}
\author{Claudine Leonhard$^1$\thanks{The author was supported partially by
the Graduate School for Computing in Medicine and Life Sciences funded by
Germany's Excellence Initiative [DFG GSC 235/2]. In addition,
this project was funded partially by the Cluster of Excellence ``The Future Ocean''. 
``The Future Ocean'' is funded within the framework
of the Excellence Initiative by the Deutsche Forschungsgemeinschaft (DFG) 
on behalf of the German federal and state governments.
e-mail: cleo@informatik.uni-kiel.de}
\ \ and Andreas R\"o\ss ler$^2$\thanks{e-mail: roessler@math.uni-luebeck.de}
\bigskip
\\
\small{$^1$Department of Computer Science, Christian-Albrechts-Universit\"at zu Kiel,} \\
\small{Christian-Albrechts-Platz 4, 24118 Kiel, Germany} \\[0.2cm]
\small{$^2$Institute of Mathematics, Universit\"at zu L\"ubeck,} \\
\small{Ratzeburger Allee 160, 23562 L\"ubeck, Germany} 
}
\date{}
\begin{document}

\maketitle

\begin{abstract}
Higher order numerical schemes for stochastic partial differential equations that do not possess 
commutative noise require the simulation of iterated stochastic integrals.
In this work, we extend the algorithms derived by Kloeden, Platen, 
and Wright \cite{MR1178485} and by Wiktorsson \cite{MR1843055} 
for the approximation of two-times iterated stochastic integrals involved in
numerical schemes for finite dimensional stochastic ordinary differential equations
to an infinite dimensional setting. 
These methods clear the way for new types of approximation schemes for SPDEs
without commutative noise.
Precisely, we analyze 
two algorithms to approximate two-times iterated integrals with respect to an 
infinite dimensional $Q$-Wiener process in case of a trace class operator $Q$
given the increments of the $Q$-Wiener process. 
Error estimates in the mean-square sense are derived and discussed for both methods.
In contrast to the finite dimensional setting, which is contained as a special case,
the optimal approximation algorithm 
cannot be uniquely determined but is dependent on the covariance operator $Q$. 
This difference arises as the stochastic process is of infinite dimension. 
\end{abstract}
%
%
\section{Introduction}
In order to obtain higher convergence rates in the approximation of stochastic differential equations,
in general, we need to incorporate the information contained in iterated integrals. 
However, in general these integrals can not be simulated directly. Therefore,
we need to replace these terms by an approximation. We illustrate this statement in a finite 
dimensional setting first, although, we are concerned about the approximation of iterated It\^{o} 
integrals in infinite dimensions in this work. \\ \\
In the numerical approximation of stochastic ordinary differential equations (SODEs) that do not
possess commutative noise, iterated stochastic integrals
have to be simulated to achieve a high order of convergence, see~\cite{MR1214374}, \cite{MR1843055}.
One example of such a higher order scheme is the Milstein scheme developed in \cite{MR1335454},
which we present below to illustrate the issue.
For some fixed $d, K\in\mathbb{N}$, we consider a $d$-dimensional SODE of type 
\begin{equation}\label{SODE}
\mathrm{d}X_t = a(X_t) \, \mathrm{d}t + \sum_{j=1}^K b^j(X_t)\, \mathrm{d} \beta_t^j
\end{equation}
with functions $a \colon \mathbb{R}^d \rightarrow \mathbb{R}^d$, 
$b^j=(b^{1,j},\ldots,b^{d,j})^T \colon \mathbb{R}^d \rightarrow \mathbb{R}^d$, $j\in\{1,\ldots,K\}$,
for all $t\geq 0$ and initial value $X_0=x_0 \in \mathbb{R}^d$. 
Moreover, $(\beta^j_t)_{t\geq 0}$, $j \in\{1,\ldots,K\}$,
denote independent real-valued Brownian motions.
For some $T>0$, we divide the time interval $[0,T]$ into $M\in\mathbb{N}$ equal 
time steps $h = \frac{T}{M}$ and 
denote $t_m =mh$ for $m\in\{0,\ldots,M\}$. The increments of the 
Brownian motion are given as
$\Delta \beta^j_m = \beta^j_{t_{m+1}}-\beta^j_{t_m}$
for all  $j\in\{1,\ldots,K\}$ and $m\in\{0,\ldots,M-1\}$.
Then, the Milstein scheme \cite{MR1335454} reads as $Y_0=x_0$ and 
\begin{align*}
    Y_{m+1} &= Y_m + a(Y_m) h + \sum_{j=1}^K b^j(Y_m) \Delta \beta^j_m 
    + \sum_{i,j =1}^K 
    \Big(\frac{\partial b^{l,i}}{\partial x_k}(Y_m)\Big)_{1\leq l,k\leq d}b^{j}(Y_m)
    \int_{t_m}^{t_{m+1}}\int_{t_m}^s  \,\mathrm{d}\beta_r^i \,\mathrm{d}\beta_s^j
\end{align*}
for $m\in\{0,\ldots,M-1\}$ using the notation $Y_m=Y_{t_m}$. 
Under suitable assumptions, 
we obtain the following error estimate 
\begin{equation}\label{ErrorMilstein}
\big( \mathrm{E}\big[|X_T-Y_M|^2\big]\big)^{\frac{1}{2}} \leq Ch,
\end{equation}
see~\cite{MR1214374}.
If SODE \eqref{SODE} does not possess commutative noise, see \cite{MR1214374} for details, 
the Milstein scheme cannot be simplified and 
one has to approximate the iterated stochastic integrals involved in
the method.
We denote these iterated It\^{o} integrals by
\begin{equation*}
    I_{(i,j)}(h) =  I_{(i,j)}(t,t+h) := \int_{t}^{t+h} \int_{t}^s 
    \mathrm{d}\beta_r^i \, \mathrm{d}\beta_s^j
\end{equation*}
for some $t\geq 0$, $h>0$, and for all $i,j  \in\{1,\ldots,K\}$, where $K\in\mathbb{N}$ 
is the number of independent Brownian motions driving the SODE.
The research by Kloeden, Platen and Wright \cite{MR1178485} and by Wiktorsson \cite{MR1843055}
suggests different methods for an approximation of these integrals,
the main ideas are outlined in Section \ref{Sec:Approx}.
We denote by $\bar{I}^{(D)}_{(i,j)}(h)$ the approximation of $I_{(i,j)}(h)$
with the algorithm derived in \cite{MR1178485} 
for $i,j  \in\{1,\ldots,K\}$, $D,K\in\mathbb{N}$, $h>0$.
In \cite{MR1178485}, the authors proved that for all
$i,j \in\{1,\ldots,K\}$ and $h>0$, it holds
\begin{equation}\label{ErrDoubleKP}
    \mathrm{E}\Big[\big\vert I_{(i,j)}(h) - \bar{I}^{(D)}_{(i,j)}(h)\big\vert^2\Big] \leq C\frac{h^2}{D},
\end{equation}
where $D\in\mathbb{N}$ denotes the index of the summand at which the series representation 
of the stochastic double integral
is truncated to obtain the approximation $\bar{I}^{(D)}_{(i,j)}(h)$.
If we use the algorithm derived in \cite{MR1843055} instead, we denote the approximation
of ${I}_{(i,j)}(h)$
by $\hat{I}^{(D)}_{(i,j)}(h)$ for all $i,j  \in\{1,\ldots,K\}$, $h>0$.
This scheme employs the same series representation as proposed in \cite{MR1178485}
but incorporates an approximation of the truncated term additionally.
The error resulting from this scheme is estimated as
\begin{equation}\label{ErrDoubleW}
 \sum_{\substack{i,j=1 \\ i<j}}^K \mathrm{E}\Big[\big\vert I_{(i,j)}(h)
 -\hat{I}^{(D)}_{(i,j)}(h)\big\vert^2\Big]
 \leq \frac{5h^2}{24\pi^2D^2}K^2(K-1),
\end{equation}
where $D$ is again the index of the summand at which the series is truncated to obtain the approximation
and $K$ is the number of independent Brownian motions, see~\cite{MR1843055}.
For fixed $h$ and $K$, both approximations converge in 
the mean-square sense as $D$ goes to infinity - with a different
order of convergence, however.
In the numerical approximation of SODEs, the integer $D$ is determined 
such that the overall order
of convergence in the time step is not distorted. For the Milstein scheme, 
for example, 
error estimate \eqref{ErrorMilstein} is considered, that is, 
a strong order of convergence of 1 
can be achieved. Therefore,
$D\geq \frac{C}{h}$ is chosen for the method derived in \cite{MR1178485}, whereas
$D\geq \frac{\sqrt{5K^2(K-1)}}{\sqrt{24\pi^2h}}$
is selected for the algorithm developed in \cite{MR1843055}, see also
\cite[Cor. 10.6.5]{MR1214374}.
This shows that if we decrease the step size $h$, the value for $D$ 
has to increase faster for the scheme developed
in \cite{MR1178485}.
Note that the error estimate \eqref{ErrDoubleW} depends on the 
number of Brownian motions $K$ as well.
As this number is fixed in the setting of finite dimensional SODEs, 
this factor is not crucial but simply a constant. 
Therefore, the algorithm proposed by Wiktorsson \cite{MR1843055} 
is superior to the one derived in 
\cite{MR1178485} in terms of the computational effort when a given 
order of convergence in the step size $h$
is to be achieved. \\ \\
The same issue arises in the context of higher order numerical 
schemes designed for infinite dimensional stochastic differential equations 
that need not have commutative noise. There, we also have to
approximate the involved iterated stochastic integrals in 
order to implement the scheme. This time, however, 
the stochastic process is infinite dimensional, in general.
In this work, we aim at devising numerical algorithms for the simulation of iterated integrals
which arise, for example, in the approximation of the mild solution
of stochastic partial differential 
equations (SPDEs) of type
\begin{equation}\label{SPDE}
\mathrm{d} X_t = \big( AX_t+F(X_t)\big) \, \mathrm{d}t + B(X_t)\, \mathrm{d}W_t, 
\quad t\in(0,T], \quad X_0 = \xi, 
\end{equation}
where the commutativity condition from \cite{MR3320928}
\begin{equation}\label{CommutativityCond}
B'(y)\big(B(y)u,v\big)= B'(y)\big(B(y)v,u\big)
\end{equation}
for all $y\in H_{\beta}$, $u,v\in U_0$ is \emph{not} assumed to hold.
Here, $H_{\beta}=D((-A)^{\beta})$ denotes a separable Hilbert space for some $\beta \in[0,1)$.
The operators $A$, $F$, $B$,
and the initial value $\xi$
are assumed to fulfill the conditions imposed for the existence of a unique mild
solution, see~\cite{MR3236753},
and are not specified further.
The spaces are introduced in Section~\ref{Sec:Approx} and 
$(W_t)_{t\geq 0}$ denotes a 
$Q$-Wiener process taking values in some separable Hilbert space $U$
for some trace class operator $Q$.
In order to approximate the mild solution of SPDEs of type \eqref{SPDE}
with a higher order scheme,
we need to simulate iterated stochastic integrals of the form
\begin{equation}\label{DoubleIntSPDE}
    \int_t^{t+h} \Psi\left(\int_t^s \Phi \, \mathrm{d}W_r\right)\, 
    \mathrm{d}W_s,
\end{equation}
for $t \geq 0$, $h>0$, and some operators $\Psi$, $\Phi$ specified in Section~\ref{Sec:Approx}. 
These terms arise if condition \eqref{CommutativityCond}
is not fulfilled, for example,
in the Milstein scheme for SPDEs \cite{MR3320928}.
%
In this Milstein scheme, it holds $\Psi = B'(Y_t)$ and $\Phi = B(Y_t)$ for some
$B \colon H \rightarrow L_{HS}(U_0,H)$ and an approximation
$Y_t \in H_{\beta}$ with $t\geq 0$ and $\beta \in[0,1)$, where
$L_{HS}(U_0,H)$ denotes the space of all Hilbert-Schmidt operators from $U_0$
to $H$. For more details, we refer to~\cite{MR3320928}. \\ \\
We want to emphasize that the algorithms developed for 
the approximation of iterated stochastic
integrals in the setting of SODEs are designed for some fixed 
finite number $K$ of driving Brownian motions
and that the approximation error \eqref{ErrDoubleW} even 
involves this number $K$ as a constant.
In contrast, when approximating the solution of SPDEs driven by 
an infinite dimensional $Q$-Wiener process, 
this number corresponds to the dimension of the finite-dimensional 
approximation subspace where the $Q$-Wiener process is projected in.
Thus, the dimension $K$ of the approximation subspace has to increase, in general, 
to attain higher accuracy, i.e., $K$ is not constant anymore but has to 
increase as well; see the error estimate of the
Milstein scheme for SPDEs in \cite{MR3320928}, for example.
Therefore, this aspect has to be taken into account in order to 
identify an appropriate approximation algorithm.
In the following, we derive two algorithms for the approximation of iterated integrals of type
\eqref{DoubleIntSPDE} based on the methods developed for the finite dimensional setting
by Kloeden, Platen, and Wright \cite{MR1178485} and by Wiktorsson \cite{MR1843055}.
These algorithms allow for the first time to implement higher order schemes
for SPDEs that do not possess commutative noise and include the algorithms
that can be used for finite dimensional SODEs as a special case.
We show that the algorithm that is superior in the setting of an infinite dimensional 
$Q$-Wiener process cannot be uniquely determined in general
but is dependent on the covariance operator $Q$.
In the analysis of the approximation error,
we need to incorporate the eigenvalues of the covariance operator $Q$. 
For the algorithm based on the approach by Kloeden, Platen, and Wright \cite{MR1178485},
we obtain a similar estimate as in \eqref{ErrDoubleKP} in 
the mean-square sense, see Corollary~\ref{Algo1Lemma}.
For the method derived in the work of 
Wiktorsson \cite{MR1843055}, we can prove two differing error estimates for 
the case of infinite dimensional $Q$-Wiener processes by different means.
One is the same, apart from constants, as estimate \eqref{ErrDoubleW}.
Moreover, the fact that we integrate with respect
to a $Q$-Wiener process with a trace class operator $Q$ allows for an alternative proof to
the one given in \cite{MR1843055}. The result allows a possibly
superior convergence in $K$ - this depends on the rate of decay of the eigenvalues of 
$Q$. Details can be found in Theorem~\ref{Algo2} and Theorem~\ref{Algo2Alternative}.
\section{Approximation of Iterated Stochastic Integrals}\label{Sec:Approx}
Throughout this work, we fix the following setting and notation.
Let $H$ and $U$ be separable real-valued Hilbert spaces. 
In the following, let $(\Omega,\mathcal{F},P,(\mathcal{F}_t)_{t\geq 0})$ 
be a probability space, let $(W_t)_{t \geq 0}$ denote a $U$-valued 
$Q$-Wiener process with respect to $(\mathcal{F}_t)_{t\geq 0}$
where $Q$ is a trace class covariance operator,
and let $U_0 := Q^{{1}/{2}}U$.
We define
$L(U,H)_{U_0} :=\{T|_{U_0} : T\in L(U,H)\}$ which is a dense subset of the space
of Hilbert-Schmidt operators $L_{HS}(U_0,H)$ \cite{MR2329435}.
Moreover, we assume that the operators $\Phi$ and $\Psi$ in \eqref{DoubleIntSPDE} fulfill 
\begin{itemize}
  \item[(A1)] $\Phi \in L(U,H)_{U_0}$ with $\|\Phi Q^{-\alpha}\|_{L_{HS}(U_0,H)}<C$,
  \item[(A2)] $\Psi\in L(H,L(Q^{-\alpha}U,H)_{U_0})$
\end{itemize} 
for some $\alpha\in(0,\infty)$. The parameter $\alpha$ determines the rate of convergence 
for the approximation of the $Q$-Wiener process, see Theorem~\ref{Algo1} or Theorem~\ref{Algo2}.
Note that assumption (A1), needed to prove the convergence of 
the approximation algorithms for iterated 
integrals in Theorem~\ref{Algo1}, Theorem~\ref{Algo2}, and Theorem~\ref{Algo2Alternative},
is less restrictive
than the condition imposed on the operator $B$ in SPDE \eqref{SPDE} 
to obtain the error estimate 
for some numerical scheme to approximate its mild solution, 
e.g., in \cite{2015arXiv150908427L}. 
However, for the Milstein scheme in \cite{MR3320928},
assumption (A2) does not need to be 
fulfilled for the error analysis of the Milstein scheme to hold true. \\ \\
If we are interested in 
the approximation of, for example, the mild solution of \eqref{SPDE}, 
a combination of the error estimate for a numerical scheme
to obtain this process
and the error from the approximation of the iterated integrals has to be analyzed.
In this case, we impose the following assumptions instead 
\begin{itemize}
  \item[(B1)] $\Phi \in L(U,H)_{U_0}$, 
  \item[(B2)] $\Psi \in L(H,L(U,H)_{U_0})$.
\end{itemize}
For the convergence results in this case, we refer to  
Corollary~\ref{Algo1Lemma} and Corollary~\ref{Algo2Lemma}, which have to be combined with estimates
on the respective numerical scheme.
These weaker conditions are sufficient as in the proof
the $Q$-Wiener process is approximated before the iterated 
integral is compared to the approximation. \\ \\
Let $Q \in L(U)$ be a nonnegative and symmetric trace class operator with 
eigenvalues $\eta_j$ and corresponding eigenfunctions $\tilde{e}_j$ for 
$j \in \mathcal{J}$ where $\mathcal{J}$ is some countable index set.
The eigenfunctions $\{\tilde{e}_j, j\in\mathcal{J}\}$ constitute an 
orthonormal basis of $U$, see~\cite[Prop. 2.1.5]{MR2329435}. 
Then, for the $Q$-Wiener process $(W_t)_{t \geq 0}$, the following series 
representation holds, see~\cite[Prop. 2.1.10]{MR2329435},
\begin{equation}\label{QSeries}
    W_t = \sum_{j \in \mathcal{J}} \sqrt{\eta_j} \beta_t^j \tilde{e}_j, 
    \quad t\geq 0.
\end{equation}
Here, $(\beta^j_t)_{t\geq 0}$ with $j\in\{k\in\mathcal{J} \, |\, \eta_k>0 \}$ 
are independent real-valued Brownian motions.
As the $Q$-Wiener process $(W_t)_{t\geq 0}$ is an infinite dimensional stochastic process,
it has to be projected to some finite dimensional subspace 
by truncating the series \eqref{QSeries} such that it can be simulated 
in a numerical scheme.
For $K \in \mathbb{N}$, we denote by $(W_t^K)_{t\geq 0}$ the projected
$Q$-Wiener process, which is defined as
\begin{equation}\label{QSeriesK}
    W_t^K = \sum_{j\in\mathcal{J}_K} 
    \langle W_t, \tilde{e}_j \rangle_U \, \tilde{e}_j , \quad t\geq 0,
\end{equation}
for some finite index set $\mathcal{J}_K \subset \mathcal{J}$ with $|\mathcal{J}_K|=K$.
This expression allows to write the iterated integral with respect to the projected
$Q$-Wiener process $(W_t^K)_{t\geq 0}$ for any $t\geq0$ and $h>0$ as
\begin{align*}
    \int_t^{t+h} \Psi \bigg( \int_t^s \Phi \,  \mathrm{d}W_r^K \bigg) \, \mathrm{d}W_s^K
    &= \int_t^{t+h} \Psi \bigg( \int_t^s \Phi \sum_{i \in \mathcal{J}_K} 
    \langle \mathrm{d}W_r, \tilde{e}_i \rangle_U \tilde{e}_i \bigg)  
    \sum_{j \in \mathcal{J}_K} \langle \mathrm{d}W_s, \tilde{e}_j \rangle_U \tilde{e}_j \\
    & = \sum_{i,j \in \mathcal{J}_K} I^Q_{(i,j)}(t,t+h)
    \, \Psi\big(\Phi\tilde{e}_i, \tilde{e}_j\big)
%
\end{align*}
with 
\begin{equation*}
  I^Q_{(i,j)}(t,t+h) := \int_{t}^{t+h} \int_{t}^s 
  \langle \mathrm{d} W_r, \tilde{e}_i \rangle_U \, 
  \langle \mathrm{d} W_s, \tilde{e}_j \rangle_U
\end{equation*}
for $i,j  \in \mathcal{J}_K$. 
Therefore, we aim at devising a method to approximate the iterated stochastic integrals
$I^Q_{(i,j)}(t,t+h)$ for all $i,j\in\mathcal{J}_K$. 
Below, we introduce two such algorithms and analyze 
as well as discuss their convergence properties. 
For simplicity of notation, we assume, without loss of generality, 
$\mathcal{J}_K = \{1,2,\ldots,K\}$ with $\eta_j \neq 0$ for $j \in \mathcal{J}_K$ 
and denote $I^Q_{(i,j)}(h)=I^Q_{(i,j)}(t,t+h)$ in the following.
\subsection{Algorithm~1}
In the following, we mainly adapt the method introduced by Kloeden, Platen, 
and Wright~\cite{MR1178485} to the setting of infinite dimensional stochastic processes.
Here, we additionally have to take into account the error arising from the projection
of the $Q$-Wiener process to a finite dimensional subspace.
%
\\ \\
For some $t \geq 0$, the coefficients of the projected $Q$-Wiener process 
$\w_{t}^j := \langle W_t, \tilde{e}_j \rangle_U$
are independent real valued random variables that are
$N(0, \eta_j \, t)$ distributed for $j \in \mathcal{J}$. 
Thus, the increments $\Delta \w_{h}^j := 
\langle W_{t+h}-W_t, \tilde{e}_j \rangle_U$ can be easily simulated 
since $\Delta \w_{h}^j$ is $N(0, \eta_j \, h)$ distributed for 
$j \in \mathcal{J}$ and $h \geq 0$.
Our goal is to obtain an approximation of the iterated integrals $I^Q_{(i,j)}(h)$ for all
$i,j \in \mathcal{J}_K$, $K\in\mathbb{N}$, $h>0$
given the realizations of the increments $\Delta \w_{h}^j$ for $j \in \mathcal{J}_K$. 
The following derivation of the approximation method follows the
representation in \cite{MR1178485} closely.
Below, let $K \in \mathbb{N}$ be arbitrarily fixed and 
let us introduce the scaled Brownian bridge process 
$(\w_s^j-\frac{s}{h}\w_h^j)_{0\leq s\leq h}$ for
$j \in \mathcal{J}_K$ and some $h \in (0,T]$. 
We consider its series expansion 
\begin{align}\label{FourierBBridge}
  \w_s^j -\frac{s}{h}\w_h^j = \frac{1}{2} a_0^j
  +\sum_{r=1}^{\infty} \Big(a^j_r\cos\Big(\frac{2r\pi s}{h}\Big) 
  +b^j_r \sin\Big(\frac{2r\pi s}{h}\Big)\Big)
\end{align}
which converges in $L^2(\Omega)$.
The coefficients are given by the following expressions
\begin{align*}
  a^j_r = \frac{2}{h} \int_0^h (\w_u^j-\frac{u}{h} \w_h^j)
  \cos\Big(\frac{2r\pi u}{h}\Big)\,\mathrm{d}u, 
  \quad
  b^j_r = \frac{2}{h} \int_0^h (\w_u^j-\frac{u}{h} \w_h^j)
  \sin\Big(\frac{2r\pi u}{h}\Big)\,\mathrm{d}u
\end{align*}
for all $j  \in \mathcal{J}_K$, $r \in\mathbb{N}_0$,
and all $0\leq s\leq h \leq T$,
see also \cite{MR1178485}. 
All coefficients $a^j_r$ and $b^j_r$ are independent and
$N(0,\tfrac{\eta_j h}{2 \pi^2 r^2})$ distributed for $r \in \mathbb{N}$ 
and $j \in \mathcal{J}_K$ and it holds $a_0^j = -2 \sum_{r=1}^{\infty} a_r^j$.
In contrast to \cite{MR1178485}, the distributions
of the coefficients additionally depend on the eigenvalues $\eta_j$
of the covariance operator $Q$.
In order to obtain an approximation of the
scaled Brownian motion $(\w_s^j)_{0\leq s\leq h}$
for some $h \in (0,T]$,
we truncate expression \eqref{FourierBBridge} at some integer $R\in\mathbb{N}$ and define
\begin{equation}\label{BrownianBridgeApprox}
  {\w_s^j}^{(R)} =\frac{s}{h} \w_h^j + \frac{1}{2} a_0^j +\sum_{r=1}^{R}
  \Big(a^j_r\cos\Big(\frac{2r\pi s}{h}\Big) +b^j_r \sin\Big(\frac{2r\pi s}{h}\Big)\Big).
\end{equation}
In fact, we are interested in the integration with respect to this process. 
According to Wong and Zakai \cite{MR0195142,MR0183023},
or \cite[Ch. 6.1]{MR1214374}, an integral with respect to process 
\eqref{BrownianBridgeApprox} converges to a Stratonovich
integral $J(h)$ as $R\rightarrow \infty$. We are, however, 
interested in the It\^{o} stochastic integral.
Following \cite[p.~174]{MR1214374}, the Stratonovich integral $J^Q_{(i,j)}(h)$ 
can be converted to an It\^{o}
integral $I^Q_{(i,j)}(h)$,
$i,j \in \mathcal{J}_K$, according to
\begin{equation*}
  {I}^Q_{(i,j)}(h) = {J}^Q_{(i,j)}(h) -\frac{1}{2} \, h \, \eta_i \, \mathds{1}_{i=j}.
\end{equation*}
That is, ${I}^Q_{(i,j)}(h) = {J}^Q_{(i,j)}(h)$ for all 
$i,j \in \mathcal{J}_K$ with $i \neq j$.
Moreover, we compute
\begin{equation*}
  I^Q_{(i,i)}(h) = \frac{\big(\Delta \w^i_h)^2 - \eta_i \, h}{2}
\end{equation*}
directly for $ i \in \mathcal{J}_K$, see~\cite[p.~171]{MR1214374}.
This implies that we only have to approximate ${I}^Q_{(i,j)}(h)$
for $i,j \in \mathcal{J}_K$ with $i \neq j$.
Thus, we obtain the desired approximation of the It\^{o} stochastic
integral directly by integrating with respect
to process \eqref{BrownianBridgeApprox}.
Without loss of generality let $t=0$.
By \eqref{FourierBBridge}, we obtain the following expression for 
the iterated stochastic integral
\begin{align}\label{DoubleLevy}
	I^Q_{(i,j)}(h) 
	&= \int_0^h  \w^i_u \, \mathrm{d}\w^j_u \nonumber \\
	&= \int_0^h \bigg( \frac{u}{h} \w_h^i + \frac{1}{2} a^i_0
	+ \sum_{r=1}^{\infty} \Big( a^i_r \cos \Big( \frac{2r \pi u}{h} \Big) 
   	+ b^i_r \sin \Big( \frac{2r \pi u}{h} \Big) \Big) \bigg) \, \mathrm{d}\w^j_u \nonumber \\
  	&= \frac{ \w_h^i}{h} \int_0^h u \, \mathrm{d}\w^j_u  
  	+ \frac{1}{2} a^i_0 \w^j_h \nonumber  \\
  	& \quad + \sum_{r=1}^{\infty} \Big( a^i_r \Big( \w_h^j 
  	+ \int_0^h \frac{2r \pi}{h}
  	\sin \Big( \frac{2r \pi u}{h} \Big) \w_u^j \, \mathrm{d}u \Big)
  	-b^i_r \int_0^h  \frac{2r \pi}{h} 
  	\cos \Big( \frac{2r \pi u}{h} \Big) \w_u^j \, \mathrm{d}u \Big) \nonumber \\
   	&= \frac{1}{2} \w_h^i \w_h^j
	- \frac{1}{2} (a^j_0 \w_h^i - a^i_0 \w^j_h)  
	+ \sum_{r=1}^{\infty} \Big( a^i_r \Big( \w_h^j 
	+ r \pi \Big( b_r^j - \frac{\w_h^j}{r \pi} \Big) \Big)
  	- \frac{2r \pi}{h} b^i_r \frac{h}{2} a_r^j \Big) \nonumber \\
   	&= \frac{1}{2} \w_h^i \w_h^j
   	- \frac{1}{2} ( a^j_0 \w_h^i - a^i_0 \w^j_h)  
   	+ \pi \sum_{r=1}^{\infty} r (a^i_r b^j_r - b^i_r a^j_r ) \nonumber \\
   	&= \frac{1}{2} \Delta\w_h^i \Delta\w_h^j 
   	+ \pi \sum_{r=1}^{\infty} r \Big(a^i_r \Big( b^j_r - \frac{1}{\pi r} \Delta \w^j_h \Big) 
   	- \Big( b^i_r - \frac{1}{\pi r} \Delta \w^i_h \Big) a^j_r \Big)
\end{align}
for all $i,j \in \mathcal{J}_K$, $i \neq j$, and $h>0$.  
Here, we employed the fact that 
$\int_0^h f(u) \, \mathrm{d} \w_u^j = f(h) \w_h^j - \int_0^h f'(u) \w_u^j \, \mathrm{d}u$ for
a continuously differentiable function $f \colon [0,h] \rightarrow \mathbb{R}$, 
$h > 0$, see~\cite[p.~89]{MR1214374},
$a_0^j = \frac{2}{h} \int_0^h \w_u^j \, \mathrm{d}u -\w_h^j$, and 
especially $a_0^j = -2 \sum_{r=1}^{\infty} a_r^j$,
$j \in \mathcal{J}_K$.
Expression \eqref{DoubleLevy} involves some scaled L\'{e}vy stochastic area
integrals which are defined as
\begin{equation}\label{AreaIntKP}
	A_{(i,j)}^Q(h) := 
 	\pi \sum_{r=1}^{\infty} r \Big(a^i_r \Big( b^j_r - \frac{1}{\pi r} \Delta \w^j_h \Big) 
   	- \Big( b^i_r - \frac{1}{\pi r} \Delta \w^i_h \Big) a^j_r \Big)
\end{equation}
for all $ i,j \in \mathcal{J}_K$, $i \neq j$, $h>0$.
We approximate these terms instead of the iterated stochastic integrals, as proposed in
\cite{MR1178485} and \cite{MR1843055}.
Due to the relations
\begin{align}
	I_{(i,j)}^Q(h) &= \frac{\Delta \w_h^i \ \Delta\w_h^j 
	- h \, \eta_i \, \delta_{ij}}{2} + A_{(i,j)}^Q(h) \label{AandI1} \\
	A^Q_{(j,i)}(h) &= -A^Q_{(i,j)}(h) \label{AandI2}\\
	A^Q_{(i,i)}(h)& = 0 \label{AandI3}
\end{align}
$\Prob$-a.s.\ for all $i,j  \in \mathcal{J}_K$, $h>0$, see~\cite{MR1843055}, it is sufficient to
simulate $A_{(i,j)}^Q(h)$  for $i,j \in \mathcal{J}_K$ with $i<j$.
By the distributional properties
of $a_r^i$ and $b_r^i$ for $r \in \mathbb{N}_0$, $i \in \mathcal{J}_K$,
we write 
\begin{equation*} 
	A_{(i,j)}^Q(h) = \frac{h}{2\pi} \sum_{r=1}^{\infty} \frac{1}{r} \Big(U_{ri}^Q 
	\Big( Z_{rj}^Q - \sqrt{\frac{2}{h}} \Delta \w_h^j \Big)
 	- U_{rj}^Q \Big( Z_{ri}^Q - \sqrt{\frac{2}{h}} \Delta \w_h^i \Big) \Big)
\end{equation*}
for all $ i,j \in \mathcal{J}_K$, $i \neq j$, $h>0$
and $A^Q(h) = \big( A_{(i,j)}^Q(h)\big)_{1 \leq i, j \leq K}$ in order to relate to 
the derivation in \cite{MR1843055}.
This representation entails the random variables $U_{ri}^Q \sim N(0,\eta_i)$, $Z_{ri}^Q \sim N(0,\eta_i)$, and
$\Delta \w_h^i \sim N(0,\eta_i \, h)$ that are all independent for $i \in \mathcal{J}_K$,
$r \in\mathbb{N}$.
As described above, we only need to approximate  $A_{(i,j)}^{Q}(h)$, $h>0$, 
for $i, j \in \mathcal{J}_K$ with $i <j $, that is, we want to simulate
\begin{equation*}
 	\tilde{A}^Q(h) =(A_{1,2}^Q(h),\ldots,A_{1,K}^Q(h),A_{2,3}^Q(h),\ldots,A_{2,K}^Q(h),
 	\ldots,A_{l,l+1}^Q(h),\ldots,A_{l,K}^Q(h),\ldots,A_{K-1,K}^Q(h)).
\end{equation*}
Therefore, we write
\begin{equation*}
	\text{vec}(A^Q(h)^T) = (A^Q_{1,1}(h),\ldots,A^Q_{1,K}(h),A^Q_{2,1}(h),\ldots,A^Q_{2,K}(h),\ldots,
	A^Q_{K,1}(h),\ldots,A^Q_{K,K}(h))^T
\end{equation*}
and introduce the selection matrix 
\begin{equation}\label{SelectionMatrix}
	H_K = \begin{pmatrix}
	0_{K-1\times 1} & I_{K-1} & 0_{K-1\times K(K-1)}\\
	0_{K-2\times K+2} & I_{K-2} & 0_{K-2\times K(K-2)}\\
	\vdots   & \vdots & \vdots\\
	0_{K-l\times(l-1)K+l} & I_{K-l} & 0_{K-l\times K(K-l)}\\
	\vdots   & \vdots & \vdots\\
	0_{1\times(K-2)K+K-1} & 1 & 0_{1\times K}
\end{pmatrix}
\end{equation}
which defines the integrals that have to be computed, compare to \cite{MR1843055}.
Further, we define the matrix
\begin{equation*}
	Q_K := \diag({\eta_1}, \ldots, {\eta_K}) .
\end{equation*}
This allows to express the vector $\tilde{A}^Q(h)$ as
\begin{align}
	\tilde{A}^Q(h) &= H_K \text{vec}(A^Q(h)^T) \nonumber \\
	&= \AQ \label{A_Vector}
\end{align}
with $\Delta\w_h^Q =(\Delta\w_h^1, \dots, \Delta\w_h^K)^T$ and the random vectors
$U_r^Q = (U_{r1}^Q, \ldots, U_{rK}^Q)^T$
and $Z_r^Q = (Z_{r1}^Q, \ldots, Z_{rK}^Q)^T$ that are independent 
and identically $N(0_K,Q_K)$ distributed for all $r \in \mathbb{N}$.
As expression \eqref{A_Vector} contains an infinite sum, we need to 
truncate it in order to compute this vector. 
For some $D \in \mathbb{N}$, this approximation is denoted as
\begin{equation} \label{Alg1-AQD-truncated}
	\tilde{A}^{Q,(D)}(h) := \AQD
\end{equation}
and we specify the remainder 
\begin{equation} \label{Alg1-RQD-truncation-error}
	\tilde{R}^{Q,(D)}(h) := \RQ .
\end{equation}
Let $A^I(h) = Q_K^{-{1}/{2}} A^Q(h) Q_K^{-{1}/{2}}$ denote the
matrix containing the standard L\'{e}vy stochastic area integrals that correspond to the case 
that $Q_K=I_K$, i.e., $\eta_j=1$ for all $j \in \mathcal{J}_K$.
Therewith, we obtain the relationship 
\begin{align*}
	\tilde{A}^Q(h) &=  H_K \text{vec}(A^Q(h)^T)
	= H_K \big( Q_K^{{1}/{2}} \otimes Q_K^{{1}/{2}} \big)
	\text{vec}(A^I(h)^T) \\
	&= H_K \big( Q_K^{{1}/{2}} \otimes Q_K^{{1}/{2}} \big)
	H_K^T H_K \text{vec}(A^I(h)^T) \\
	&= H_K \big( Q_K^{{1}/{2}} \otimes Q_K^{{1}/{2}} \big) H_K^T \tilde{A}^I(h),
\end{align*}
where $\tilde{A}^I(h) := H_K \text{vec}(A^I(h)^T)$ and where we employed
\begin{equation*}
	H_K^T H_K = \diag( 0, \mathbf{1}^T_{K-1}, 0, 0, \mathbf{1}^T_{K-2}, \ldots, \mathbf{0}^T_l,
	\mathbf{1}^T_{K-l}, \ldots, \mathbf{0}^T_{K-1}, 1, \mathbf{0}^T_{K} ) \in \mathbb{R}^{K^2 \times K^2}
\end{equation*}
and the fact that we are interested in indices $i,j \in \mathcal{J}_K$ with $i<j$ only. We denote
\begin{equation*}
 \tilde{Q}_K := H_K \big(Q_K^{{1}/{2}} \otimes Q_K^{{1}/{2}}\big)H_K^T,
\end{equation*}
which is of size $L \times L$ with $L = \frac{K(K-1)}{2}$, such that the vector of interest
is given by
\begin{equation*}
 \tilde{A}^Q(h) = \tilde{Q}_K  \tilde{A}^I(h).
\end{equation*}
Now, we can represent the approximation $\tilde{A}^{Q,(D)}(h)$ of $\tilde{A}^{Q}(h)$ 
as
\begin{equation*}
 	\tilde{A}^{Q,(D)}(h) = \tilde{Q}_K \tilde{A}^{I,(D)}(h)
\end{equation*}
and the vector of truncation errors by
	$\tilde{R}^{Q,(D)}(h) = \tilde{Q}_K \tilde{R}^{I,(D)}(h)$
where $\tilde{A}^{I,(D)}(h)$ and $\tilde{R}^{I,(D)}(h)$ denote, in analogy to  
\eqref{Alg1-AQD-truncated} and \eqref{Alg1-RQD-truncation-error},
the truncated part of $\tilde{A}^I(h)$ and its truncation error, respectively.
Note that $\tilde{A}^I(h)$ and especially $\tilde{A}^{I,(D)}(h)$ correspond 
to the case where $\eta_j=1$ for all $j \in \mathcal{J}_K$, i.e., 
$Q_K=I_K$ in \eqref{A_Vector} and \eqref{Alg1-AQD-truncated}, respectively.
This also corresponds to the setting in \cite{MR1178485} if $\mathcal{J}$ is finite.\\ \\
\phantomsection \label{Sec:Algo1}\noindent
We summarize the representation above to formulate Algorithm~1
for some $h>0$, $t,t+h \in [0,T]$, and $D,K\in\mathbb{N}$:
\begin{sffamily}
\begin{enumerate}
 \item  For $j \in \mathcal{J}_K$, simulate the Fourier coefficients 
 $\Delta \w_{h}^j = \langle W_{t+h}-W_t, \tilde{e}_j \rangle_U$ 
 of the increment $W_{t+h}-W_t$ 
 with
 $\Delta \w_h^Q = \big( \Delta \w^1_h, \ldots, \Delta \w^K_h \big)^T$ as
 \begin{equation*}
 		\Delta\w_h^Q = \sqrt{h} \, Q_K^{{1}/{2}} V
 \end{equation*}
 where $V  \sim N(0_K,I_K)$.
 \item Approximate $\tilde{A}^Q(h)$ as
  \begin{equation*}
   		\tilde{A}^{Q,(D)}(h) 
    		= H_K \big( Q_K^{{1}/{2}} \otimes Q_K^{{1}/{2}} \big) H_K^T
    		\ADV
 \end{equation*}
 where $U_r, Z_r \sim N(0_K, I_K)$ are independent.
 \item Compute the approximation $ \text{vec}((\bar{I}^{Q,(D)}(h))^T)$ of $ \text{vec}((I^Q(h)^T)$ as
 \begin{equation*}
 		\text{vec}((\bar{I}^{Q,(D)}(h))^T) = \frac{\Delta\w_h^Q \otimes \Delta\w_h^Q - \text{vec}(h \, Q_K)}{2}
 		+ (I_{K^2} - S_K) H_K^T \tilde{A}^{Q,(D)}(h) 
 \end{equation*}
 with $S_K := \sum_{i=1}^K \mathbf{e}_i^T \otimes (I_K \otimes \mathbf{e}_i)$, where
 $\mathbf{e}_i$ denotes the $i$-th unity vector. 
\end{enumerate}
\end{sffamily}
We obtain the following error estimate for this approximation method; 
the mean-square error converges with
order $1/2$ in $D$ while the convergence in $K$
is determined by the operator $Q$.
The first term results from the approximation of the 
$Q$-Wiener process by $(W_t^K)_{t\geq 0}$,
whereas the second term is due to the approximation of 
the iterated integral with respect to this
truncated process by Algorithm~1. 
\begin{thm}[Convergence for Algorithm~1]\label{Algo1}
	Assume that $Q \in L(U)$ is a nonnegative and symmetric trace class operator with eigenvalues 
	$\{ \eta_j : j \in \mathcal{J}\}$.
	Further, let $\Phi \in L(U,H)_{U_0}$ with $\| \Phi Q^{-\alpha}\|_{L_{HS}(U_0,H)}<C$ for some $C>0$,
	let
	$\Psi \in L(H,L(Q^{-\alpha}U,H)_{U_0})$ for some $\alpha\in(0,\infty)$, i.e., (A1) and (A2) 
	are fulfilled, and let $(W_t)_{t \in [0,T]}$ be a $Q$-Wiener process. Then, it holds
 	\begin{align*}
 		&\bigg( \mathrm{E}\bigg[\Big\|\int_t^{t+h} 
 		\Psi\Big( \int_t^s\Phi \, \mathrm{d}W_r\Big) \,\mathrm{d}W_s
 		- \sum_{i, j \in \mathcal{J}_K} \bar{I}_{(i,j)}^{Q,(D)}(h) \;
 		\Psi\big(\Phi\tilde{e}_i, \tilde{e}_j\big)
 		\Big\|_H^2\bigg]\bigg)^{\frac{1}{2}} \\ 
 		&\leq  C_Qh\Big(\sup_{j\in\mathcal{J} \setminus
 		\mathcal{J}_K}\eta_j\Big)^{\alpha} + C_Q\frac{h}{\pi \sqrt{D}}
 	\end{align*}
 	for some $C_Q>0$ and all $h>0$, $t,t+h\in[0,T]$,  $D, K \in \mathbb{N}$,
 	and $\mathcal{J}_K \subset \mathcal{J}$ with $|\mathcal{J}_K|=K$.
\end{thm}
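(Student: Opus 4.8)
The plan is to insert the intermediate quantity $Z^K:=\int_t^{t+h}\Psi\big(\int_t^s\Phi\,\mathrm{d}W_r^K\big)\,\mathrm{d}W_s^K$ and to apply the triangle inequality in $L^2(\Omega;H)$, so that the two summands of the asserted bound correspond, respectively, to the error from projecting $(W_t)_{t\ge0}$ onto $(W_t^K)_{t\ge0}$ and to the error of Algorithm~1 on the resulting finite-dimensional problem. Concretely, by the identity $\int_t^{t+h}\Psi\big(\int_t^s\Phi\,\mathrm{d}W_r^K\big)\,\mathrm{d}W_s^K=\sum_{i,j\in\mathcal{J}_K}I_{(i,j)}^Q(h)\,\Psi(\Phi\tilde e_i,\tilde e_j)$ established above, it suffices to bound $\big(\mathrm{E}\big[\|\int_t^{t+h}\Psi(\int_t^s\Phi\,\mathrm{d}W_r)\,\mathrm{d}W_s-Z^K\|_H^2\big]\big)^{1/2}$ by $C_Qh(\sup_{j\in\mathcal{J}\setminus\mathcal{J}_K}\eta_j)^{\alpha}$ and $\big(\mathrm{E}\big[\|\sum_{i,j\in\mathcal{J}_K}(I_{(i,j)}^Q(h)-\bar I_{(i,j)}^{Q,(D)}(h))\,\Psi(\Phi\tilde e_i,\tilde e_j)\|_H^2\big]\big)^{1/2}$ by $C_Qh/(\pi\sqrt D)$.

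For the projection error I would write the difference as $T_1+T_2$, where $T_1=\int_t^{t+h}\Psi\big(\int_t^s\Phi\,\mathrm{d}(W_r-W_r^K)\big)\,\mathrm{d}W_s$ (using linearity of the inner integral and of $\Psi$ in its $H$-argument) and $T_2=\int_t^{t+h}\Psi\big(\int_t^s\Phi\,\mathrm{d}W_r^K\big)\,\mathrm{d}(W_s-W_s^K)$, and then apply the It\^o isometry in $H$ to $T_1$ and to $T_2$, and once more to the respective inner integrals. The decisive feature is that $W_t-W_t^K=\sum_{j\in\mathcal{J}\setminus\mathcal{J}_K}\sqrt{\eta_j}\,\beta_t^j\tilde e_j$, so that every factor containing an increment of $W-W^K$ contributes a sum over $\mathcal{J}\setminus\mathcal{J}_K$ only: in $T_1$ a factor $\sum_{j\in\mathcal{J}\setminus\mathcal{J}_K}\eta_j\|\Phi\tilde e_j\|_H^2$, and in $T_2$, after invoking (A2) to bound $\|\Psi(\cdot)\|_{L(Q^{-\alpha}U,H)}$, a factor $\sum_{j\in\mathcal{J}\setminus\mathcal{J}_K}\eta_j^{1+2\alpha}$. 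Writing $\|\Phi\tilde e_j\|_H=\eta_j^{\alpha}\|\Phi Q^{-\alpha}\tilde e_j\|_H$ yields $\sum_{j\in\mathcal{J}\setminus\mathcal{J}_K}\eta_j\|\Phi\tilde e_j\|_H^2\le(\sup_{j\in\mathcal{J}\setminus\mathcal{J}_K}\eta_j)^{2\alpha}\|\Phi Q^{-\alpha}\|_{L_{HS}(U_0,H)}^2$, which is finite by (A1), while $\sum_{j\in\mathcal{J}\setminus\mathcal{J}_K}\eta_j^{1+2\alpha}\le(\sup_{j\in\mathcal{J}\setminus\mathcal{J}_K}\eta_j)^{2\alpha}\,\tr Q<\infty$ since $Q$ is trace class; all remaining factors are $Q$-dependent constants. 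Each integration contributes one power of $h$, so $\mathrm{E}[\|T_1\|_H^2]+\mathrm{E}[\|T_2\|_H^2]\le C_Q^2h^2(\sup_{j\in\mathcal{J}\setminus\mathcal{J}_K}\eta_j)^{2\alpha}$. (Under (B1)--(B2) only, the same computation with $\alpha=0$ gives Corollary~\ref{Algo1Lemma}.)

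For the truncation error set $R_{ij}:=I_{(i,j)}^Q(h)-\bar I_{(i,j)}^{Q,(D)}(h)$. From \eqref{AandI1}--\eqref{AandI3} and the third step of Algorithm~1 one reads off $R_{ii}=0$, $R_{ji}=-R_{ij}$, and, for $i\ne j$, the representation $R_{ij}=\frac{h}{2\pi}\sum_{r=D+1}^{\infty}\frac1r\big(U_{ri}^QV_{rj}^Q-U_{rj}^QV_{ri}^Q\big)$ with $V_{ri}^Q:=Z_{ri}^Q-\sqrt{2/h}\,\Delta\w_h^i$, these being the $(i,j)$-entries of the truncation error \eqref{Alg1-RQD-truncation-error}. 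Consequently the sum of interest equals $\sum_{1\le i<j\le K}R_{ij}\big(\Psi(\Phi\tilde e_i,\tilde e_j)-\Psi(\Phi\tilde e_j,\tilde e_i)\big)$. The key step is the covariance structure of the $R_{ij}$: since the $U_{ri}^Q$ are mutually independent in $(r,i)$ and independent of all $Z$'s and of all increments, and only the increments $\Delta\w_h^i$ are common to different $r$, a Gaussian pairing argument gives $\mathrm{E}[R_{ij}R_{kl}]=0$ whenever $\{i,j\}\ne\{k,l\}$, and, for $i\ne j$, $\mathrm{E}[R_{ij}^2]=\frac{h^2}{4\pi^2}\big(\sum_{r=D+1}^{\infty}r^{-2}\big)\,\mathrm{E}\big[(U_{ri}^QV_{rj}^Q-U_{rj}^QV_{ri}^Q)^2\big]=\frac{3h^2\eta_i\eta_j}{2\pi^2}\sum_{r=D+1}^{\infty}r^{-2}\le\frac{3h^2\eta_i\eta_j}{2\pi^2 D}$. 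Hence $\{R_{ij}\}_{1\le i<j\le K}$ is an uncorrelated family and
\[
\mathrm{E}\Big[\Big\|\sum_{1\le i<j\le K}R_{ij}\big(\Psi(\Phi\tilde e_i,\tilde e_j)-\Psi(\Phi\tilde e_j,\tilde e_i)\big)\Big\|_H^2\Big]
=\sum_{1\le i<j\le K}\mathrm{E}[R_{ij}^2]\,\big\|\Psi(\Phi\tilde e_i,\tilde e_j)-\Psi(\Phi\tilde e_j,\tilde e_i)\big\|_H^2 .
\]
Bounding $\|\Psi(\Phi\tilde e_i,\tilde e_j)\|_H\le\|\Psi\|\,\eta_i^{\alpha}\eta_j^{\alpha}\|\Phi Q^{-\alpha}\tilde e_i\|_H$ from (A1)--(A2) and summing, the residual factor $\sum_{i,j}\eta_i^{1+2\alpha}\eta_j^{1+2\alpha}\|\Phi Q^{-\alpha}\tilde e_i\|_H^2\le\|Q\|^{2\alpha}\|\Phi Q^{-\alpha}\|_{L_{HS}(U_0,H)}^2\cdot\|Q\|^{2\alpha}\tr Q$ is a finite $Q$-dependent constant, so the whole expression is $\le C_Q^2h^2/(\pi^2 D)$. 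Taking square roots and adding the bounds for the two pieces gives the claim. (Equivalently, this piece can be handled at the level of vectors via $\tilde R^{Q,(D)}(h)=\tilde Q_K\tilde R^{I,(D)}(h)$ together with the covariance of the standardized vector $\tilde R^{I,(D)}(h)$.)

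The step I expect to be the main obstacle is the covariance computation for the $R_{ij}$: one has to keep precise track of which Gaussian variables are sampled afresh for each summand $r$ (the $U_{ri}^Q$ and $Z_{ri}^Q$) and which are shared across all $r$ (the increments $\Delta\w_h^i$) in order to verify both the vanishing of the cross-covariances $\mathrm{E}[R_{ij}R_{kl}]$ for $\{i,j\}\ne\{k,l\}$ and the exact value of $\mathrm{E}[R_{ij}^2]$. This decoupling is exactly what makes the dependence on $K$ enter only through the $Q$-dependent quantity $\sum_{i<j}\eta_i\eta_j\|\Psi(\Phi\tilde e_i,\tilde e_j)-\Psi(\Phi\tilde e_j,\tilde e_i)\|_H^2$ rather than through $K$ itself. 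A secondary technical point is the operator-theoretic bookkeeping required to extract precisely the exponent $\alpha$ from (A1)--(A2) and to see that all the eigenvalue series converge because $Q$ is of trace class.
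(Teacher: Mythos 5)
Your proposal is correct and follows essentially the same route as the paper: split off the projection error of $W\mapsto W^K$ (handled by It\^{o}'s isometry together with (A1)--(A2) and the factor $(\sup_{j\in\mathcal{J}\setminus\mathcal{J}_K}\eta_j)^{2\alpha}$), then exploit the pairwise uncorrelatedness of the truncation errors $R_{ij}$ and the explicit variance $\mathrm{E}[R_{ij}^2]=\tfrac{3h^2\eta_i\eta_j}{2\pi^2}\sum_{r>D}r^{-2}\le \tfrac{3h^2\eta_i\eta_j}{2\pi^2 D}$, which matches the paper's computation via the Fourier coefficients $a_r^j,b_r^j$. The only cosmetic difference is that you bound $\|\Psi(\Phi\tilde e_i,\tilde e_j)\|_H$ directly from (A1)--(A2) with extra $\eta^{\alpha}$ factors, whereas the paper invokes the operator norms as in (B1)--(B2); both yield a finite $Q$-dependent constant and the same final rate.
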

\begin{proof}
 For a proof, we refer to Section~\ref{Sec:Proofs}.
\end{proof}
 Note that in the convergence analysis of numerical schemes for SPDEs, we compare the approximation
 of the iterated stochastic integrals to integrals with respect
 to $(W_t^K)_{t\geq0}$, $K\in\mathbb{N}$, see the proofs in \cite{MR3320928} and 
 \cite{2015arXiv150908427L}, for example, that is, the analysis involves
  the error estimate stated in Corollary~\ref{Algo1Lemma} below.
  We want to emphasize that this estimate is independent of the integer $K$.
%
\begin{cor}\label{Algo1Lemma}
	Assume that $Q$ is a nonnegative and symmetric trace class operator 
	and $(W_t)_{t \geq 0}$ is a $Q$-Wiener process.
 	Furthermore, let $\Phi \in L(U,H)_{U_0}$ and $\Psi \in L(H,L(U,H)_{U_0})$, i.e., assumptions
 	(B1) and (B2) are fulfilled. Then, it holds
 	\begin{align*}
 		&\bigg( \mathrm{E} \bigg[ \Big\| \int_t^{t+h} \Psi
 		\Big( \int_t^s\Phi \, \mathrm{d}W_r^K \Big) \, \mathrm{d}W_s^K
 		- \sum_{i, j \in \mathcal{J}_K} \bar{I}_{(i,j)}^{Q,(D)}(h)
 		\; \Psi\big(\Phi \tilde{e}_i, \tilde{e}_j \big)
 		\Big\|_H^2 \bigg] \bigg)^{\frac{1}{2}} \leq  C_Q \frac{h}{\pi \sqrt{D}}
 	\end{align*}
 	for some $C_Q>0$ and all $h>0$, $t,t+h\in[0,T]$,  $D,K\in\mathbb{N}$,
 	and $\mathcal{J}_K \subset \mathcal{J}$ with $|\mathcal{J}_K|=K$.
\end{cor}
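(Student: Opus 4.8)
The plan is to derive Corollary~\ref{Algo1Lemma} as a direct consequence of the proof of Theorem~\ref{Algo1}, exploiting that the assumptions (B1) and (B2) are exactly what is needed once the $Q$-Wiener process has already been truncated to $(W_t^K)_{t\geq 0}$. The key observation is that the quantity to be estimated here differs from the one in Theorem~\ref{Algo1} only in that the outer stochastic integral is taken with respect to $W^K$ rather than $W$, which removes precisely the first (projection) error term $C_Q h (\sup_{j\in\mathcal{J}\setminus\mathcal{J}_K}\eta_j)^\alpha$. So the whole proof reduces to isolating the second half of the argument for Theorem~\ref{Algo1}.

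The steps I would carry out: First, rewrite the iterated integral with respect to $(W_t^K)_{t\geq 0}$ using the expansion established in Section~\ref{Sec:Approx}, namely
\begin{equation*}
  \int_t^{t+h} \Psi\Big(\int_t^s \Phi\, \mathrm{d}W_r^K\Big)\,\mathrm{d}W_s^K
  = \sum_{i,j\in\mathcal{J}_K} I^Q_{(i,j)}(h)\,\Psi\big(\Phi\tilde{e}_i,\tilde{e}_j\big),
\end{equation*}
so that the error becomes $\sum_{i,j\in\mathcal{J}_K}\big(I^Q_{(i,j)}(h)-\bar{I}^{Q,(D)}_{(i,j)}(h)\big)\Psi(\Phi\tilde{e}_i,\tilde{e}_j)$. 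Second, apply the triangle inequality in $L^2(\Omega;H)$ together with $\|\Psi(\Phi\tilde{e}_i,\tilde{e}_j)\|_H \le \|\Psi\|\,\|\Phi\tilde{e}_i\|$ (using (B1), (B2)) to bound the $H$-norm by a weighted sum of the scalar mean-square errors $\big(\mathrm{E}[|I^Q_{(i,j)}(h)-\bar{I}^{Q,(D)}_{(i,j)}(h)|^2]\big)^{1/2}$. Third, recall from \eqref{AandI1}--\eqref{AandI3} that $I^Q_{(i,j)}(h)-\bar{I}^{Q,(D)}_{(i,j)}(h) = \tilde R^{Q,(D)}_{(i,j)}(h)$ (the $\tfrac12\Delta\w_h^i\Delta\w_h^j$ part is computed exactly), so the error is governed entirely by the truncation-error vector $\tilde R^{Q,(D)}(h) = \tilde Q_K \tilde R^{I,(D)}(h)$. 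Fourth, compute the second moment of the truncated tail $\sum_{r=D+1}^{\infty}\frac{1}{r}(\cdots)$: since the coefficients $U^Q_{ri},Z^Q_{ri},\Delta\w_h^i$ are independent Gaussians with the stated variances, the $r$-th summand has variance of order $h^2\eta_i\eta_j/r^2$, and $\sum_{r>D} r^{-2}\le 1/D$, giving the $h^2/D$ behaviour. Fifth, collect the eigenvalue factors $\eta_i,\eta_j$ into a constant $C_Q$ using $\sum_{j\in\mathcal{J}}\eta_j = \tr Q < \infty$ (trace class), which also handles the fact that the sum is over $K^2$ terms — the trace class property is what keeps $C_Q$ independent of $K$, and this is the crucial point the corollary is advertising.

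The main obstacle I expect is the bookkeeping in step five: showing the constant is genuinely independent of $K$. One must be careful that the double sum $\sum_{i,j\in\mathcal{J}_K}$ of terms of size $\sqrt{\eta_i\eta_j}\,h/\sqrt{D}$, when combined with the operator norms $\|\Phi\tilde e_i\|$, telescopes into something controlled by $(\sum_i \eta_i)(\sum_j\|\Phi\tilde e_j\|^2)^{1/2}$ or a similar product of convergent series rather than a $K$-dependent quantity; here (B1) enters through $\Phi\in L(U,H)_{U_0}$ meaning $\|\Phi\|_{L_{HS}(U_0,H)}^2 = \sum_j \eta_j\|\Phi\tilde e_j/\sqrt{\eta_j}\|^2 <\infty$, and one has to match the Cauchy--Schwarz splitting so that the surviving factors are exactly the Hilbert--Schmidt norm of $\Phi$ restricted to $U_0$, the operator norm of $\Psi$, and $\tr Q$. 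Once that algebra is arranged cleanly, the bound $C_Q\,h/(\pi\sqrt D)$ follows, and since no projection step is involved the proof is otherwise a simplification of the one for Theorem~\ref{Algo1}; I would therefore present it as such, referring to Section~\ref{Sec:Proofs} for the shared computations and only spelling out the point where the $K$-independence is secured.
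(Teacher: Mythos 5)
Your overall strategy --- isolate the second half of the proof of Theorem~\ref{Algo1}, reduce to the scalar errors $I^Q_{(i,j)}(h)-\bar{I}^{Q,(D)}_{(i,j)}(h)$, compute the tail variance $\sim h^2\eta_i\eta_j/r^2$ and use $\sum_{r>D}r^{-2}\le 1/D$ --- is exactly what the paper does. But there is a genuine gap at your second step: you propose to pass from the $H$-valued error to the scalar errors by the \emph{triangle inequality} in $L^2(\Omega;H)$. That step destroys the $K$-independence you correctly identify as the whole point of the corollary. The triangle inequality leaves you with
\[
\sum_{\substack{i,j\in\mathcal{J}_K\\ i\neq j}}\Big(\mathrm{E}\big[|I^Q_{(i,j)}(h)-\bar{I}^{Q,(D)}_{(i,j)}(h)|^2\big]\Big)^{1/2}\,\big\|\Psi(\Phi\tilde{e}_i,\tilde{e}_j)\big\|_H
\;\lesssim\;\frac{h}{\sqrt{D}}\sum_{\substack{i,j\in\mathcal{J}_K\\ i\neq j}}\sqrt{\eta_i\eta_j}\,,
\]
and $\sum_{i,j}\sqrt{\eta_i\eta_j}=(\sum_i\sqrt{\eta_i})^2-\tr Q$ is \emph{not} controlled by the trace class property: for $\eta_j\asymp j^{-2}$ one has $\tr Q<\infty$ but $\sum_{j\le K}\sqrt{\eta_j}\asymp\log K\to\infty$. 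The Cauchy--Schwarz rearrangements you sketch do not rescue this under (B1)--(B2): pairing $\sqrt{\eta_i}$ with $\|\Phi\tilde{e}_i\|_H$ produces the Hilbert--Schmidt norm of $\Phi$ on $U$ (i.e.\ $\sum_i\|\Phi\tilde{e}_i\|_H^2$), not on $U_0$ (which would be $\sum_i\eta_i\|\Phi\tilde{e}_i\|_H^2$), and the former is not assumed finite; the leftover factor $\sum_{j}\sqrt{\eta_j}$ or $K\,\tr Q$ survives either way.

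The missing idea is orthogonality: the truncation errors $I^Q_{(i,j)}(h)-\bar{I}^{Q,(D)}_{(i,j)}(h)$ are pairwise uncorrelated across distinct index pairs (up to the sign relation between $(i,j)$ and $(j,i)$, which only costs a factor $2$), as the paper notes with reference to Kloeden--Platen. Hence the \emph{squared} $L^2(\Omega;H)$-error equals (up to that factor)
\[
\sum_{i,j\in\mathcal{J}_K}\mathrm{E}\big[(I^Q_{(i,j)}(h)-\bar{I}^{Q,(D)}_{(i,j)}(h))^2\big]\,\big\|\Psi(\Phi\tilde{e}_i,\tilde{e}_j)\big\|_H^2
\;\le\;C\,\frac{3h^2}{\pi^2 D}\sum_{\substack{i,j\in\mathcal{J}_K\\ i<j}}\eta_i\eta_j
\;\le\;3C(\tr Q)^2\frac{h^2}{\pi^2 D},
\]
where now the \emph{product} $\eta_i\eta_j$ (rather than its square root) appears and is summable to $(\tr Q)^2$ independently of $K$, while $\|\Psi(\Phi\tilde{e}_i,\tilde{e}_j)\|_H\le\|\Psi\|_{L(H,L(U,H))}\|\Phi\|_{L(U,H)}$ is bounded uniformly by (B1)--(B2). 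With this replacement of your step two, the rest of your outline goes through and coincides with the paper's argument (which formally derives the corollary from Theorem~\ref{Algo1} by setting $\eta_i=0$ for $i\in\mathcal{J}\setminus\mathcal{J}_K$).
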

\begin{proof}
	If we set $\eta_i=0$ for all $i \in \mathcal{J} \setminus \mathcal{J}_K$,
	the result follows directly from Theorem~\ref{Algo1}.
\end{proof}
 Next, we outline an alternative algorithm to approximate integrals of type \eqref{DoubleIntSPDE}. 
 In contrast 
 to the method presented above, the vector of tail sums $\tilde{R}^{Q,(D)}(h)$ is approximated and included
 in the computation.
\subsection{Algorithm~2}
The following derivation is based on the scheme developed by Wiktorsson \cite{MR1843055} for SODEs.
In the finite dimensional setting, the error estimate \eqref{ErrDoubleW} depends on the
number of Brownian motions $K$ additionally to the time step size $h$.
This suggests that the computational cost 
involved in the simulation of
the stochastic double
integrals is much larger in the setting of
SPDEs as the number of independent Brownian motions is, in general, not finite,
see also expression \eqref{QSeries}.
The eigenvalues of the $Q$-Wiener process are, however,
not incorporated in the error estimate \eqref{ErrDoubleW}.
For example, if we assume $\eta_j = \Oo(j^{-\rho_Q})$ 
for some $\rho_Q>1$, $C>0$, $j\in\mathcal{J} \subset \mathbb{N}$, we obtain -- for $\rho_Q\in(1,3)$ -- an
improved error estimate which depends on the rate of decay 
of the eigenvalues instead of some fixed exponent of $K$.
This results from the fact that we integrate with respect to a $Q$-Wiener process in our setting,
where $Q$ is a nonnegative, symmetric trace class operator.
For $\rho_Q\geq 3$, we can show that the exponent of $K$ is bounded by 3.\\ \\
As before, we truncate the series \eqref{A_Vector} at some integer $D\in\mathbb{N}$ and obtain
the approximation $\tilde{A}^{Q,(D)}(h)$ in \eqref{Alg1-AQD-truncated}.
The vector of tail sums $\tilde{R}^{Q,(D)}(h)$ in \eqref{Alg1-RQD-truncation-error}, 
however, is not discarded but approximated by a 
multivariate normally distributed random
vector instead, as described in \cite{MR1843055} for $Q_K = I_K$ and $|\mathcal{J}|=K$.
First, we determine the distribution of the tail sums;
for $r\in \mathbb{N}$, we compute the covariance matrix of
\begin{equation*}
	\begin{split}
	V_r^Q &:= \VQr \\
%
%
	\end{split}
\end{equation*}
conditional on
$Z_r^Q$  and $\Delta \w_h^Q$ as
\begin{align}
	\SigmaQCond
	&= \mathrm{E}\big[V_r^Q {V_r^Q}^T | Z_r^Q, \Delta \w_h^Q \big]
	- \mathrm{E}\big[ V_r^Q | Z_r^Q, \Delta \w_h^Q \big] 
	\mathrm{E}\big[V_r^Q | Z_r^Q, \Delta \w_h^Q \big]^T \nonumber \\
%
%
	&= (S_K-I_{K^2}) \Big( \Big( Z_r^Q - \sqrt{\frac{2}{h}} \Delta\w_h^Q \Big)
	\Big( Z_r^Q - \sqrt{\frac{2}{h}} \Delta\w_h^Q \Big)^T \otimes  Q_K \Big) (S_K-I_{K^2})
\end{align}
with $S_K = \sum_{i=1}^K \mathbf{e}_i^T \otimes (I_K \otimes \mathbf{e}_i)$, where
$\mathbf{e}_i$ denotes the $i$-th unity vector. 
This expression can be reformulated without using the operator $S_K$
by taking into account that
\begin{align*}
	&\mathrm{E} \Big[ U_r^Q \Big(Z_r^Q - \sqrt{\frac{2}{h}} \Delta\w_h^Q \Big)^T 
	\otimes \Big(Z_r^Q - \sqrt{\frac{2}{h}} \Delta\w_h^Q \Big) {U_r^Q}^T \Big| Z_r^Q, \Delta \w_h^Q \Big] \\
	&= \Big( I_K \otimes \diag\Big( Z_r^Q - \sqrt{\frac{2}{h}} \Delta\w_h^Q \Big) \Big)
	\big( \mathbf{1}_K^T \otimes (Q_K \otimes \mathbf{1}_K) \big)
	\Big( \diag \Big( Z_r^Q - \sqrt{\frac{2}{h}} \Delta\w_h^Q \Big) \otimes I_K \Big) \\
	&= \Big( Q_K^{1/2} \otimes \diag\Big( Z_r^Q - \sqrt{\frac{2}{h}} \Delta\w_h^Q \Big) \Big)
	\Big( \Big( Z_r^Q - \sqrt{\frac{2}{h}} \Delta\w_h^Q \Big)^T \otimes (Q_K^{1/2} \otimes \mathbf{1}_K) \Big)
\end{align*}
as
\begin{align}
	&\SigmaQCond \nonumber \\
	&= Q_K \otimes \Big( Z_r^Q - \sqrt{\frac{2}{h}} \Delta\w_h^Q \Big) 
	\Big( Z_r^Q - \sqrt{\frac{2}{h}} \Delta\w_h^Q \Big)^T
	+ \Big( Z_r^Q - \sqrt{\frac{2}{h}} \Delta\w_h^Q \Big)
	\Big( Z_r^Q - \sqrt{\frac{2}{h}} \Delta\w_h^Q \Big)^T \otimes Q_K \nonumber \\
	&\quad - \Big( Q_K^{1/2} \otimes \diag\Big( Z_r^Q - \sqrt{\frac{2}{h}} \Delta\w_h^Q \Big) \Big)
	\Big( \Big( Z_r^Q - \sqrt{\frac{2}{h}} \Delta\w_h^Q \Big)^T \otimes (Q_K^{1/2} \otimes \mathbf{1}_K) \Big)
%
	\nonumber \\
	&\quad - \Big( \Big( Z_r^Q - \sqrt{\frac{2}{h}} \Delta\w_h^Q \Big) \otimes (Q_K^{1/2} \otimes \mathbf{1}_K^T) \Big)
	\Big( Q_K^{1/2} \otimes \diag\Big( Z_r^Q - \sqrt{\frac{2}{h}} \Delta\w_h^Q \Big) \Big) 
%
	.
\end{align}
Analogously to \cite{MR1843055},
by taking the expectation, we define
\begin{align} \label{SigmaInf}
	\SQinfty &= \mathrm{E} \Big[ H_K \Sigma^Q(V_1^Q)_{|Z_1^Q, \Delta \w_h^Q} H_K^T 
	\Big| \Delta \w_h^Q \Big] \nonumber \\
	&= 
	2 H_K (Q_K \otimes Q_K) H_K^T
	+ \frac{2}{h} H_K (I_{K^2}-S_K) \big( Q_K \otimes \big( 
	\Delta\w_h^Q {\Delta\w_h^Q}^T \big) \big)
	(I_{K^2}-S_K) H_K^T. 
%
\end{align}
Taking into consideration that
\begin{align*}
	&\mathrm{E} \Big[ \Big( I_K \otimes \diag\Big( Z_r^Q - \sqrt{\frac{2}{h}} \Delta\w_h^Q \Big) \Big)
	\big( \mathbf{1}_K^T \otimes (Q_K \otimes \mathbf{1}_K) \big)
	\Big( \diag \Big( Z_r^Q - \sqrt{\frac{2}{h}} \Delta\w_h^Q \Big) \otimes I_K \Big) \Big| \Delta \w_h^Q \Big] \\
	&= \frac{2}{h} \Big( I_K \otimes \diag\big( \Delta\w_h^Q \big) \Big)
	\big( \mathbf{1}_K^T \otimes (Q_K \otimes \mathbf{1}_K) \big)
	\Big( \diag \big( \Delta\w_h^Q \big) \otimes I_K \Big) \\
	&\quad + \sum_{i=1}^K \big( Q_K^{1/2} \mathrm{e}_i \big)^T \otimes 
	\big( I_K \otimes Q_K^{1/2} \mathrm{e}_i \big)
\end{align*}
and that $H_K \big( \sum_{i=1}^K \big( Q_K^{1/2} \mathrm{e}_i \big)^T \otimes 
\big( I_K \otimes Q_K^{1/2} \mathrm{e}_i \big) \big) H_K^T = 0$, it follows that
expression \eqref{SigmaInf} can be rewritten as
\begin{align} \label{SigmaInf2}
	\SQinfty 
	&=2 H_K (Q_K \otimes Q_K) H_K^T 
	+ \frac{2}{h} H_K \Big( Q_K \otimes \Delta\w_h^Q {\Delta \w_h^Q}^T
	+ \Delta\w_h^Q {\Delta \w_h^Q}^T \otimes Q_K \nonumber \\
	& \quad - \big(Q_K^{{1}/{2}} \otimes \diag(\Delta\w_h^Q) \big)
	\big( {\Delta\w_h^Q}^T \otimes (Q_K^{{1}/{2}} \otimes \mathbf{1}_K) \big) \nonumber \\
	& \quad - \big( \Delta\w_h^Q \otimes (Q_K^{{1}/{2}} \otimes \mathbf{1}_K^T) \big)
	\big( Q_K^{{1}/{2}} \otimes \diag(\Delta\w_h^Q) \big) \Big) H_K^T.
\end{align}
This implies that, given $Z^Q=(Z_r^Q)_{r \in \mathbb{N}}$
and $\Delta \w_h^Q$, the vector of tail sums $\tilde{R}^{Q,(D)}(h)$  
is conditionally Gaussian distributed with the following parameters
\begin{equation*}
 	\tilde{R}^{Q,(D)}(h)_{|Z^Q, \Delta \w_h^Q} 
 	\sim N \Big(0_{L}, 
 	\Big( \frac{h}{2\pi} \Big)^2 \sum_{r=D+1}^{\infty}
 	\frac{1}{r^2} H_K \SigmaQCond H_K^T 
 	\Big)
\end{equation*}
for $D \in \mathbb{N}$.
Hence, given $Z^Q$ and $\Delta \w_h^Q$, we can approximate 
the tail sums by simulating 
a conditionally standard Gaussian random vector
${\Upsilon^{Q,(D)}}_{|Z^Q,\Delta \w_h^Q} \sim N(0_{L}, I_{L})$ defined as
\begin{equation*}
 	\Upsilon^{Q,(D)} = \frac{2\pi}{h}  \bigg( \sum_{r={D+1}}^{\infty} \frac{1}{r^2}
 	H_K \SigmaQCond H_K^T \bigg) ^{-\frac{1}{2}} \tilde{R}^{Q,(D)}(h)
\end{equation*}
and, therewith, obtain the vector of tail sums 
\begin{equation}\label{ApproxRemainder}
 	\tilde{R}^{Q,(D)}(h) = \frac{h}{2\pi} \bigg(\sum_{r={D+1}}^{\infty}  \frac{1}{r^2}
	H_K \SigmaQCond H_K^T \bigg)^{\frac{1}{2}} \Upsilon^{Q,(D)} .
\end{equation}
It remains to examine, how the covariance matrix evolves as $D \to \infty$.
For $D \in \mathbb{N}$, we define the matrix
\begin{equation}\label{SigmaD}
 	\SigmaDQ := \bigg( \sum_{r=D+1}^{\infty} \frac{1}{r^2} \bigg)^{-1} 
 	\sum_{r=D+1}^{\infty} \frac{H_K \SigmaQCond H_K^T}{r^2} .
\end{equation}
%
%
%
By the proof of Theorem~\ref{Algo2} below,
we get convergence in the following sense
\begin{equation*}
	\lim_{D \to \infty} \mathrm{E} \Big[ \big\| \SigmaDQ - \SQinfty \big\|_F^2 \Big] = 0,
\end{equation*}
where $\|\cdot\|_F$ denotes the Frobenius norm. Thus, it follows
\begin{equation*}
 	\frac{2\pi}{h} \bigg( \sum_{r={D+1}}^{\infty} \frac{1}{r^2} \bigg)^{-\frac{1}{2}} \tilde{R}^{Q,(D)}(h)
 	\stackrel{d}{\longrightarrow} \mathbf{\xi} \sim N\big(0_{L}, \SQinfty \big)
\end{equation*}
as $D \to \infty$, see also \cite{MR1843055}. \\ \\
\phantomsection \label{Sec:Algo2}\noindent
Combining the above, we obtain an algorithm very similar to the one in \cite{MR1843055},
where steps $1,2$, and $4$ equal Algorithm~1. Additionally, we approximate the vector of
tail sums in step 3. For some $h>0$, $t,t+h \in [0,T]$, and $D,K\in\mathbb{N}$ the Algorithm~2 
is defined as follows:
\begin{sffamily}
\begin{enumerate}
 \item For $j \in \mathcal{J}_K$, simulate the Fourier coefficients 
 $\Delta \w_{h}^j = \langle W_{t+h}-W_t, \tilde{e}_j \rangle_U$ 
 of the increment $W_{t+h}-W_t$ 
 with
 $\Delta \w_h^Q = \big( \Delta \w^1_h, \ldots, \Delta \w^K_h \big)^T$ as
 \begin{equation*}
 		\Delta\w_h^Q = \sqrt{h} \, Q_K^{{1}/{2}} V
 \end{equation*}
 where $V  \sim N(0_K,I_K)$.
 \item Approximate $\tilde{A}^Q(h)$ as
 \begin{equation*}
   		\tilde{A}^{Q,(D)}(h) = H_K \big(Q_K^{{1}/{2}} \otimes Q_K^{{1}/{2}} \big) H_K^T 
   		\ADV
 \end{equation*}
 where $U_r, Z_r \sim N(0_K, I_K)$ are independent.
 \item Simulate $\Upsilon^{Q,(D)} \sim N(0_{L},I_{L})$ and compute
 \begin{equation}\label{ApproxA}
  		\hat{A}^{Q,(D)}(h) = \tilde{A}^{Q,(D)}(h)
  		+ \frac{h}{2\pi} \bigg( \sum_{r=D+1}^{\infty}\frac{1}{r^2} \bigg)^{\frac{1}{2}}
  		\sqrt{\SQinfty} \Upsilon^{Q,(D)} .
 \end{equation}
 \item Compute the approximation $ \text{vec}((\hat{I}^{Q,(D)}(h))^T)$ of $ \text{vec}((I^Q(h)^T)$ as
 \begin{equation*}
 	\text{vec}((\hat{I}^{Q,(D)}(h))^T) = \frac{\Delta\w_h^Q \otimes \Delta\w_h^Q
 	-\text{vec}(h Q_K)}{2}
 	+ (I_{K^2}-S_K) H_K^T \hat{A}^{Q,(D)}(h) 
 \end{equation*}
 with $S_K = \sum_{i=1}^K \mathbf{e}_i^T \otimes (I_K \otimes \mathbf{e}_i)$. 
\end{enumerate}
\end{sffamily}
Note that the matrix $\sqrt{\SQinfty}$ in step 3 is the Cholesky 
decomposition of $\SQinfty$. This expression, which is specified in the following theorem,
can be obtained in closed
form and does not have to be computed numerically.
\begin{thm}[Cholesky Decomposition]\label{CholeskyDecomp}
	Let $\SQinfty$ be defined as in \eqref{SigmaInf} or \eqref{SigmaInf2} with 
	$\Delta\w_h^Q = \sqrt{h} \, Q_K^{{1}/{2}} V$ and let $\Sinfty$ be
	defined by \eqref{SigmaInf} or \eqref{SigmaInf2} with $Q_K=I_K$. 
	Then, it holds
	\begin{equation*}
 		\sqrt{\SQinfty} 
 		= \tilde{Q}_K \frac{\Sinfty + 2 \sqrt{1+ V^T V} I_{K^2}}
		{\sqrt{2} \big(1+\sqrt{1+ V^T V}\big)}.
	\end{equation*}
\end{thm}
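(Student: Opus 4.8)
The plan is to reduce the statement for a general covariance $Q_K$ to the case $Q_K = I_K$ by a similarity transformation with $\tilde Q_K$, and to observe that in that case $\Sinfty - 2I_L$ is, up to the scalar factor $V^T V$, an orthogonal projection, so that its square root becomes a rational expression in $\Sinfty$. Throughout write $L = \tfrac{K(K-1)}{2}$, $M := I_{K^2} - S_K$ and $\Pi := H_K^T H_K$, and recall the elementary facts: $S_K$ is symmetric with $S_K^2 = I_{K^2}$, hence $M = M^T$, $M^2 = 2M$, and $\tfrac12 M$ is the orthogonal projection onto the antisymmetric tensors; $S_K(A\otimes B) = (B\otimes A)S_K$ for all $A,B\in\mathbb{R}^{K\times K}$; $H_K H_K^T = I_L$; and $\Pi$ is the orthogonal projection of $\mathbb{R}^{K^2}$ onto $\Span\{\mathbf e_i\otimes\mathbf e_j : 1\le i<j\le K\}$. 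From these I would derive three auxiliary identities: $M$ commutes with the diagonal matrix $Q_K^{1/2}\otimes Q_K^{1/2}$ (commutation-matrix rule); $H_K(Q_K^{1/2}\otimes Q_K^{1/2}) = \tilde Q_K H_K$ and, by transposing, $(Q_K^{1/2}\otimes Q_K^{1/2})H_K^T = H_K^T\tilde Q_K$ (since $Q_K^{1/2}\otimes Q_K^{1/2}$ is diagonal and hence leaves $\image(I_{K^2}-\Pi)$ invariant); and, the key one, $M\Pi M = M$ (a short check on the generators $\mathbf e_i\otimes\mathbf e_j - \mathbf e_j\otimes\mathbf e_i$; equivalently, composing the antisymmetrizer, the projection onto the $i<j$ coordinates, and the antisymmetrizer again halves the antisymmetrizer).

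Next I would carry out the reduction. Using $\Delta\w_h^Q = \sqrt h\, Q_K^{1/2}V$ we have $Q_K\otimes\big(\Delta\w_h^Q (\Delta\w_h^Q)^T\big) = h\,(Q_K^{1/2}\otimes Q_K^{1/2})(I_K\otimes VV^T)(Q_K^{1/2}\otimes Q_K^{1/2})$; substituting this into \eqref{SigmaInf}, commuting $M$ past $Q_K^{1/2}\otimes Q_K^{1/2}$, and pulling $H_K(Q_K^{1/2}\otimes Q_K^{1/2})$ to the left and its transpose to the right via the auxiliary identities yields $\SQinfty = \tilde Q_K\big(2I_L + 2P\big)\tilde Q_K$ with $P := H_K M (I_K\otimes VV^T) M H_K^T$. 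Running the same computation with $Q_K = I_K$ (so $\tilde Q_K = I_L$ and $\Delta\w_h^Q = \sqrt h\, V$) gives $\Sinfty = 2I_L + 2P$, whence $\SQinfty = \tilde Q_K\,\Sinfty\,\tilde Q_K$, and it remains to exhibit $\Sinfty^{1/2}$ in closed form.

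Set $N := I_K\otimes VV^T$ and $T := M N M$, so $P = H_K T H_K^T$. I would first show $P^2 = (V^T V)P$: by $M\Pi M = M$ and $M^2 = 2M$ one has $P^2 = H_K\,T\,\Pi\,T\,H_K^T = H_K\, M N M N M\, H_K^T = \tfrac12 H_K T^2 H_K^T$, and applying $T$ twice to $\mathbf e_i\otimes\mathbf e_j$ while using $N^2 = (V^T V)N$ and $\sum_k V_k(\mathbf e_k\otimes V) = \sum_k V_k(V\otimes\mathbf e_k) = V\otimes V$ gives $T^2 = 2(V^T V)T$, hence $P^2 = (V^T V)P$. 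Since $N$ is positive semidefinite and $M$ symmetric, $P$ is positive semidefinite, so its eigenvalues lie in $\{0, V^T V\}$ and those of $\Sinfty = 2I_L + 2P$ in $\{2, 2 + 2V^T V\}$. With $s := \sqrt{1 + V^T V}$ the matrix $R_0 := \sqrt 2\, I_L + \tfrac{\sqrt 2}{1+s}\,P = \dfrac{\Sinfty + 2s\, I_L}{\sqrt 2\,(1+s)}$ is symmetric positive semidefinite, and using $P^2 = (V^T V)P$ together with $4(1+s) + 2V^T V = 2(1+s)^2$ one checks $R_0^2 = 2I_L + 2P = \Sinfty$, so $R_0 = \Sinfty^{1/2}$. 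Then $R := \tilde Q_K R_0$ satisfies $R R^T = \tilde Q_K R_0 R_0^T \tilde Q_K = \tilde Q_K\,\Sinfty\,\tilde Q_K = \SQinfty$ because $\tilde Q_K$ and $R_0$ are symmetric, and $R = \tilde Q_K\,\dfrac{\Sinfty + 2s\,I_L}{\sqrt 2\,(1+s)}$ is exactly the asserted expression (the identity in the statement being $I_L$, $L = \tfrac{K(K-1)}{2}$).

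The only non-routine part is the identity $P^2 = (V^T V)P$ of the third step: the delicate bookkeeping is to keep track of how the commutation matrix $S_K$ (entering through $M$), the coordinate projection $\Pi = H_K^T H_K$, and the rank-one factor $VV^T$ interlace through the Kronecker products — concretely, proving $M\Pi M = M$ and $T^2 = 2(V^T V)T$. Once these are available, the reduction $\SQinfty = \tilde Q_K\Sinfty\tilde Q_K$ and the evaluation of the square root are elementary linear algebra.
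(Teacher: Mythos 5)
Your proposal is correct. The overall strategy coincides with the paper's: reduce to the case $Q_K=I_K$ via the congruence $\SQinfty=\tilde{Q}_K\,\Sinfty\,\tilde{Q}_K$ and then verify that the candidate matrix, multiplied by its transpose, reproduces $\SQinfty$, which boils down to a quadratic relation satisfied by $\Sinfty$. The difference lies in how the two ingredients are established. The paper simply asserts the congruence relation and, for the quadratic relation $\Sinfty(\Sinfty)^T-(2+2a^2)\Sinfty+4a^2I=0$ with $a=\sqrt{1+V^TV}$, defers entirely to Wiktorsson's Theorem~4.1, which argues via the eigenvalues of the minimal polynomial. You instead derive both facts self-containedly: the congruence follows from your commutation identities for $H_K$, $S_K$ and the diagonal matrix $Q_K^{1/2}\otimes Q_K^{1/2}$, and the quadratic relation is repackaged as $\Sinfty=2I_L+2P$ with $P^2=(V^TV)P$, proved by direct Kronecker-product algebra (the identities $M\Pi M=M$ and $T^2=2(V^TV)T$ check out, the latter because $(VV^T\otimes VV^T)M=0$). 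Your $P^2=(V^TV)P$ is exactly equivalent to the cited minimal-polynomial identity, so nothing new is claimed, but your version makes the rank-one-perturbation structure of $\Sinfty$ explicit and removes the external dependence; it also correctly flags that the $I_{K^2}$ in the theorem's display should be $I_L$ with $L=\tfrac{K(K-1)}{2}$.
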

\begin{proof}
 For a proof, we refer to Section~\ref{Sec:Proofs}.
\end{proof}
Now, we analyze the error resulting from Algorithm~2. 
In the following theorem, the first
term is the same as in the error estimate of Algorithm~1, see Theorem~\ref{Algo1}.
Due to the second term, the approximations converge with order $1$ in $D$,
which is twice the order that Algorithm~1 attains. However, 
this expression is dependent on $K$
as well. Below, we state an alternative estimate -- there, the exponent of $K$ is not fixed but
dependent on the eigenvalues $\eta_j$, $j\in\mathcal{J}_K$, see Theorem~\ref{Algo2Alternative}.
The algorithm that is superior can only be 
determined in dependence on the operator $Q$. 
\begin{thm}[Convergence for Algorithm~2]\label{Algo2}
	Assume that $Q$ is a nonnegative and symmetric trace class operator 
	and $(W_t)_{t \geq 0}$ is a $Q$-Wiener process.
	Further, let $\Phi \in L(U,H)_{U_0}$ with $\| \Phi  Q^{-\alpha} \|_{L_{HS}(U_0,H)}<C$,
	$\Psi\in L(H,L(Q^{-\alpha}U,H)_{U_0})$ for some $\alpha\in(0,\infty)$, i.e., 
	assumptions (A1) and (A2) are fulfilled. 
	Then, it holds
	 \begin{align*}
 		&\bigg(\mathrm{E} \bigg[ \Big\| \int_t^{t+h} 
 		\Psi\Big( \int_t^s\Phi  \, \mathrm{d}W_r \Big) \, \mathrm{d}W_s
 		- \sum_{i,j \in \mathcal{J}_K} \hat{I}_{(i,j)}^{Q,(D)}(h)
 		\; \Psi\big(\Phi \tilde{e}_i, \tilde{e}_j \big)
 		\Big\|_H^2 \bigg] \bigg)^{\frac{1}{2}} \\ 
 		&\leq C_Q h \Big(\sup_{j\in\mathcal{J} \setminus
 		\mathcal{J}_K} \eta_j \Big)^{\alpha}
 		+  C_Q\frac{h}{D} \sqrt{K^2(K-1)}
 	\end{align*}
 	for some $C_Q>0$ and all $h>0$, $t,t+h\in[0,T]$,  $D,K\in\mathbb{N}$,
 	and $\mathcal{J}_K \subset \mathcal{J}$ with $|\mathcal{J}_K|=K$.
\end{thm}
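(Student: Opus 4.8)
The plan is to follow the two-step structure of the proof of Theorem~\ref{Algo1}. First I would insert the projected process $(W_t^K)_{t\geq 0}$ and split the left-hand side, by the triangle inequality, into a \emph{projection error}
\[
\Big( \mathrm{E}\Big[ \Big\| \int_t^{t+h}\Psi\Big(\int_t^s\Phi\,\mathrm{d}W_r\Big)\mathrm{d}W_s - \int_t^{t+h}\Psi\Big(\int_t^s\Phi\,\mathrm{d}W_r^K\Big)\mathrm{d}W_s^K \Big\|_H^2 \Big] \Big)^{1/2}
\]
and an \emph{algorithm error}
\[
\Big( \mathrm{E}\Big[ \Big\| \int_t^{t+h}\Psi\Big(\int_t^s\Phi\,\mathrm{d}W_r^K\Big)\mathrm{d}W_s^K - \sum_{i,j\in\mathcal{J}_K}\hat I_{(i,j)}^{Q,(D)}(h)\,\Psi\big(\Phi\tilde e_i,\tilde e_j\big) \Big\|_H^2 \Big] \Big)^{1/2}.
\]
The projection error is exactly the quantity already bounded by $C_Q h(\sup_{j\in\mathcal{J}\setminus\mathcal{J}_K}\eta_j)^{\alpha}$ in the proof of Theorem~\ref{Algo1} --- it uses only (A1), (A2) and the series representation \eqref{QSeries} --- so it may be quoted verbatim from Section~\ref{Sec:Proofs} and yields the first summand. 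It then remains to bound the algorithm error by $C_Q\,\tfrac{h}{D}\sqrt{K^2(K-1)}$.

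For the algorithm error I would pass to the stochastic-area vectors. Using the identity $\int_t^{t+h}\Psi(\int_t^s\Phi\,\mathrm{d}W_r^K)\mathrm{d}W_s^K=\sum_{i,j\in\mathcal{J}_K}I_{(i,j)}^Q(h)\Psi(\Phi\tilde e_i,\tilde e_j)$ from Section~\ref{Sec:Approx}, the relation \eqref{AandI1}, the fact that step~4 of Algorithm~2 reproduces the term $\tfrac12(\Delta\w_h^i\Delta\w_h^j-h\eta_i\delta_{ij})$ exactly, and the antisymmetry relations \eqref{AandI2}, \eqref{AandI3} (which hold for $\hat A^{Q,(D)}(h)$ as well by construction), the difference inside the norm equals $\sum_{1\le i<j\le K}\big(A_{(i,j)}^Q(h)-\hat A_{(i,j)}^{Q,(D)}(h)\big)\big(\Psi(\Phi\tilde e_i,\tilde e_j)-\Psi(\Phi\tilde e_j,\tilde e_i)\big)$. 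Since $\tilde A^Q(h)=\tilde{Q}_K\tilde A^I(h)$, $\tilde A^{Q,(D)}(h)=\tilde{Q}_K\tilde A^{I,(D)}(h)$ and $\sqrt{\SQinfty}=\tilde{Q}_K\sqrt{\Sinfty}$ (Theorem~\ref{CholeskyDecomp}), choosing the square root $\tilde{Q}_K\sqrt{\Sigma^{I,(D)}}$ of $\SigmaDQ$ and the \emph{same} conditionally standard Gaussian vector $\Upsilon^{Q,(D)}$ in the exact representation \eqref{ApproxRemainder} of the tail and in \eqref{ApproxA}, one gets $\hat A^{Q,(D)}(h)-\tilde A^Q(h)=\tilde{Q}_K\big(\hat A^{I,(D)}(h)-\tilde A^I(h)\big)$, where $\Sigma^{I,(D)}$ is the matrix \eqref{SigmaD} for $Q_K=I_K$ and $\hat A^{I,(D)}(h)$ the corresponding $Q_K=I_K$ analogue of \eqref{ApproxA}. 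Hence, with $\tilde w_{ij}:=\sqrt{\eta_i\eta_j}\,(\Psi(\Phi\tilde e_i,\tilde e_j)-\Psi(\Phi\tilde e_j,\tilde e_i))\in H$, the algorithm error equals $\big(\mathrm{E}[\|\sum_{i<j}(\hat A^{I,(D)}_{(i,j)}(h)-A^I_{(i,j)}(h))\,\tilde w_{ij}\|_H^2]\big)^{1/2}$, which is at most $\|\mathcal{W}\|\,\big(\mathrm{E}[\|\hat A^{I,(D)}(h)-\tilde A^I(h)\|^2]\big)^{1/2}$ for the linear map $\mathcal{W}\colon\mathbb{R}^{K(K-1)/2}\to H$, $\mathcal{W}\mathbf e_{ij}=\tilde w_{ij}$. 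A Cauchy--Schwarz estimate --- bounding $\|\Psi(\Phi\tilde e_i,\tilde e_j)\|_H\le\|\Psi\|\,\|\Phi\tilde e_i\|_H\,\eta_j^{\alpha}$ by (A2), using $\sum_i\eta_i\|\Phi\tilde e_i\|_H^2=\|\Phi\|_{L_{HS}(U_0,H)}^2<\infty$ (which follows from (A1)) and $\sum_j\eta_j^{2\alpha+1}\le\tr(Q)(\sup_j\eta_j)^{2\alpha}<\infty$ --- shows $\|\mathcal{W}\|\le C_Q$ with $C_Q$ \emph{independent of $K$}; so it suffices to prove $\mathrm{E}[\|\hat A^{I,(D)}(h)-\tilde A^I(h)\|^2]\le C\,\tfrac{h^2}{D^2}K^2(K-1)$, i.e.\ the standard case $Q_K=I_K$.

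The heart of the proof is this last estimate, which is the analogue of \eqref{ErrDoubleW}. With the coupling fixed above, conditionally on $Z^I=(Z_r^I)_{r\in\mathbb{N}}$ and $\Delta\w_h^I$ one has $\hat A^{I,(D)}(h)-\tilde A^I(h)=\tfrac{h}{2\pi}\big(\sum_{r>D}r^{-2}\big)^{1/2}\big(\sqrt{\Sinfty}-\sqrt{\Sigma^{I,(D)}}\big)\Upsilon^{Q,(D)}$, hence $\mathrm{E}[\|\hat A^{I,(D)}(h)-\tilde A^I(h)\|^2]=\big(\tfrac{h}{2\pi}\big)^2\big(\sum_{r>D}r^{-2}\big)\,\mathrm{E}\big[\|\sqrt{\Sinfty}-\sqrt{\Sigma^{I,(D)}}\|_F^2\big]$. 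Three ingredients finish this. (i) Decomposing $H_K\big(U_r^I\otimes(Z_r^I-\sqrt{2/h}\Delta\w_h^I)-\cdots\big)$ into the part involving $Z_r^I$ and the part involving $\Delta\w_h^I$, which are uncorrelated, one sees from \eqref{SigmaInf2} (with $Q_K=I_K$) that $\Sinfty=2I_{K(K-1)/2}+(\text{positive semidefinite})$, so the square-root perturbation bound $\|\sqrt{\Sinfty}-\sqrt{\Sigma^{I,(D)}}\|_F\le\tfrac{1}{\sqrt2}\|\Sinfty-\Sigma^{I,(D)}\|_F$ holds unconditionally. (ii) Since $\Sigma^{I,(D)}$ is an $r^{-2}$-weighted average of independent matrices with conditional mean $\Sinfty$, $\mathrm{E}[\|\Sinfty-\Sigma^{I,(D)}\|_F^2]=\big(\sum_{r>D}r^{-2}\big)^{-2}\big(\sum_{r>D}r^{-4}\big)\,\mathrm{E}\big[\|H_K\SigmaICond H_K^T-\Sinfty\|_F^2\big]$, and $\big(\sum_{r>D}r^{-2}\big)^{-2}\big(\sum_{r>D}r^{-4}\big)=\mathcal{O}(1/D)$; together with the prefactor $\big(\tfrac{h}{2\pi}\big)^2\sum_{r>D}r^{-2}=\mathcal{O}(h^2/D)$ this produces the two powers of $1/D$. (iii) The remaining fourth moment is finite and of order $K^2(K-1)$: $H_K\SigmaICond H_K^T-\Sinfty$ has a nonzero $((i,j),(k,l))$-entry only when $\{i,j\}\cap\{k,l\}\neq\emptyset$ --- that is, for $\mathcal{O}(K)$ of the $\mathcal{O}(K^2)$ choices of $(k,l)$ per $(i,j)$ --- and each such entry is a polynomial in $U_r^I,Z_r^I,\Delta\w_h^I$ with bounded second moment; this is exactly Wiktorsson's constant. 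Assembling (i)--(iii), the reduction of the previous paragraph, and the projection error completes the proof.

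I expect ingredient~(iii) --- the closed-form second-moment computation for $H_K\SigmaICond H_K^T-\Sinfty$, using the sparsity induced by $H_K$ to keep the bound at order $K^2(K-1)$ rather than $K^4$ --- together with verifying the exact factorisations through $\tilde{Q}_K$ and the Cholesky formula of Theorem~\ref{CholeskyDecomp}, to be the main obstacle, since this is where the analysis of \cite{MR1843055} is recast in the presence of the eigenvalues $\eta_j$ and where the dimensional factor $K^2(K-1)$ is extracted; by contrast, the operator $Q$ enters the remaining steps only through the $K$-independent constant $C_Q$ and the trace-class and Hilbert--Schmidt bounds.
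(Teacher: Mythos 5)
Your proposal is correct, and its skeleton is the paper's: the same splitting \eqref{SplitDouble} into a projection error (bounded exactly as in \eqref{W-WK}) and an algorithm error, and the same reduction of the algorithm error, via the factorisations $\SigmaDQ=\tilde{Q}_K\Sigma^{I,(D)}\tilde{Q}_K^T$, $\SQinfty=\tilde{Q}_K\Sinfty\tilde{Q}_K^T$ and the common conditionally standard Gaussian vector $\Upsilon^{Q,(D)}$, to the quantity $\big(\tfrac{h}{2\pi}\big)^2\big(\sum_{r>D}r^{-2}\big)\,\mathrm{E}\big[\|\tilde{Q}_K(\sqrt{\Sigma^{I,(D)}}-\sqrt{\Sinfty})\|_F^2\big]$ with a $K$-independent constant in front. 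Where you genuinely diverge is the terminal identity-covariance estimate: the paper simply imports $\mathrm{E}\big[\|\sqrt{\Sigma^{I,(D)}}-\sqrt{\Sinfty}\|_F^2\big]\cdot\sum_{r>D}r^{-2}=\mathcal{O}\big(K^2(K-1)/D^2\big)$ from Theorems 4.1--4.3 of \cite{MR1843055}, whereas you re-derive it from the lower bound $\Sinfty\succeq 2I_L$ combined with the square-root perturbation lemma, the conditional-variance decomposition over $r$, and the entry-wise second-moment count exploiting the sparsity of $H_K\SigmaICond H_K^T-\Sinfty$; this is exactly the machinery the paper itself deploys, for general $Q_K$, in the alternative proof of Theorem~\ref{Algo2Alternative}, so your ingredients (i)--(iii) are all available and the orders in $D$ and $K$ come out right. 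Two remarks. First, your trace argument $\mathrm{E}[\|\mathcal{W}c\|_H^2]=\mathrm{E}[\operatorname{trace}(\mathcal{W}^*\mathcal{W}\,cc^T)]\le\|\mathcal{W}\|^2\sum_{i<j}\mathrm{E}[c_{ij}^2]$, with the weights $\sqrt{\eta_i\eta_j}$ absorbed into $\mathcal{W}$ so that $\|\mathcal{W}\|$ is controlled by the Hilbert--Schmidt norm of $\Phi$ and hence is $K$-independent, is actually \emph{more} careful than the paper's step ``as in \eqref{ErrorGleich}'': for Algorithm~2 the component errors $\tilde{A}^Q_{(i)}(h)-\hat{A}^{Q,(D)}_{(i)}(h)$ all share the single vector $\Upsilon^{Q,(D)}$ and are therefore not uncorrelated, so the orthogonality that justifies the clean sum of squares for Algorithm~1 is unavailable, and your argument is the right way to recover the bound without an extra factor of $L$. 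Second, quote the square-root perturbation bound in the form of Lemma~\ref{CholEV} (with the smallest eigenvalue of $\Sinfty$, which your decomposition $\Sinfty=2I_L+(\text{positive semidefinite})$ bounds below by $2$); your unsquared variant with constant $1/\sqrt{2}$ changes only the constant, not the rates.
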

\begin{proof}
 For a proof, we refer to Section~\ref{Sec:Proofs}.
\end{proof}
For completeness, we state the following error estimate.
Again, this is the estimate that we employ
when incorporating the approximation of the iterated 
integrals into a numerical scheme; see also the
notes on Corollary~\ref{Algo1Lemma}.
\begin{cor}\label{Algo2Lemma}
	Assume that $Q$ is a nonnegative and symmetric trace class operator 
	and $(W_t)_{t \geq 0}$ is a $Q$-Wiener process.
 	Furthermore, let $\Phi  \in L(U,H)_{U_0}$, $\Psi\in L(H,L(U,H)_{U_0})$, 
 	i.e., conditions (B1) and (B2) 
 	are fulfilled.
 	Then, it holds
 	\begin{align*}
 		&\bigg(\mathrm{E}\bigg[ \Big\| \int_t^{t+h}
 		\Psi \Big( \int_t^s \Phi \, \mathrm{d}W_r^K \Big) \, \mathrm{d}W_s^K
 		- \sum_{i,j \in \mathcal{J}_K} \hat{I}_{(i,j)}^{Q,(D)}(h)
 		\; \Psi\big(\Phi \tilde{e}_i, \tilde{e}_j\big) \Big\|_H^2 \bigg] \bigg)^{\frac{1}{2}}
 		\leq  C_Q\frac{h}{D}\sqrt{K^2(K-1)}
 	\end{align*}
 	for some $C_Q > 0$ and all $h>0$, $t,t+h\in[0,T]$,  $D,K\in\mathbb{N}$,
 	and $\mathcal{J}_K \subset \mathcal{J}$ with $|\mathcal{J}_K|=K$.
\end{cor}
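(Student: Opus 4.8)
The plan is to deduce Corollary~\ref{Algo2Lemma} from Theorem~\ref{Algo2} in exactly the same way that Corollary~\ref{Algo1Lemma} was deduced from Theorem~\ref{Algo1}: set $\eta_i = 0$ for all $i \in \mathcal{J} \setminus \mathcal{J}_K$. With this choice the tail eigenvalues vanish, so the projected process $(W_t^K)_{t\ge 0}$ coincides with the full $Q$-Wiener process $(W_t)_{t\ge 0}$ associated with the (degenerate) operator that keeps only the eigenvalues $\eta_1,\dots,\eta_K$. Consequently the two iterated integrals
\[
\int_t^{t+h} \Psi\Big(\int_t^s \Phi\,\mathrm{d}W_r^K\Big)\,\mathrm{d}W_s^K
\quad\text{and}\quad
\int_t^{t+h} \Psi\Big(\int_t^s \Phi\,\mathrm{d}W_r\Big)\,\mathrm{d}W_s
\]
agree almost surely, and the supremum term in the bound of Theorem~\ref{Algo2}, namely $\bigl(\sup_{j\in\mathcal{J}\setminus\mathcal{J}_K}\eta_j\bigr)^{\alpha}$, becomes $0$. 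What remains is precisely the second term $C_Q\,\tfrac{h}{D}\sqrt{K^2(K-1)}$, which is the claimed estimate.

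First I would note that conditions (B1) and (B2) on $\Phi$ and $\Psi$ are strictly weaker than (A1) and (A2); but after truncating the eigenvalues to the finite set $\{\eta_1,\dots,\eta_K\}$, the operator $Q^{-\alpha}$ acts only on the finite-dimensional span of $\{\tilde e_1,\dots,\tilde e_K\}$ and is therefore bounded there, so (B1), (B2) automatically upgrade to (A1), (A2) for this degenerate operator (with a constant $C$ that may depend on $K$ and on the retained eigenvalues, but this is harmless because the final bound for the truncated process does not involve the tail term at all). This is the same reasoning implicit in the proof of Corollary~\ref{Algo1Lemma}. One should also remark that all objects appearing in Algorithm~2 -- the matrices $Q_K$, $\tilde Q_K$, $H_K$, $\SQinfty$, the approximation $\hat I^{Q,(D)}(h)$ -- depend only on $\eta_1,\dots,\eta_K$ and are thus unaffected by setting the remaining eigenvalues to zero.

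I do not expect a genuine obstacle here; the only point requiring a line of care is the justification that Theorem~\ref{Algo2} remains applicable when $Q$ is merely nonnegative rather than strictly positive on all of $U$, i.e. when some eigenvalues vanish. Since Theorem~\ref{Algo2} is already stated for a nonnegative symmetric trace class operator $Q$ (and the series representation \eqref{QSeries} sums only over indices with $\eta_k>0$), this is covered directly by its hypotheses. Hence the corollary follows from Theorem~\ref{Algo2} by specializing to the operator whose nonzero eigenvalues are exactly $\{\eta_j : j\in\mathcal{J}_K\}$, which makes the first term in the error bound vanish and leaves $C_Q\,\frac{h}{D}\sqrt{K^2(K-1)}$.
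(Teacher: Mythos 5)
Your reduction is correct, but it is not the route the paper takes for this particular corollary. The paper's proof is simply a pointer into the proof of Theorem~\ref{Algo2}: there the error is split as in \eqref{SplitDouble}, and the middle portion of that proof (explicitly flagged with ``the following part also proves Corollary~\ref{Algo2Lemma}'') bounds
$\mathrm{E}\big[\|\int_t^{t+h}\Psi(\int_t^s\Phi\,\mathrm{d}W_r^K)\,\mathrm{d}W_s^K-\sum_{i,j}\hat{I}^{Q,(D)}_{(i,j)}(h)\Psi(\Phi\tilde e_i,\tilde e_j)\|_H^2\big]$
directly via the Frobenius-norm estimate \eqref{Eqn-Frobenius} and Wiktorsson's results, using only the (B1), (B2) operator norms together with $\max_i\eta_i^2$. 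You instead treat Theorem~\ref{Algo2} as a black box and specialize to the truncated covariance operator with $\eta_i=0$ for $i\notin\mathcal{J}_K$ --- exactly the trick the paper uses to deduce Corollary~\ref{Algo1Lemma} from Theorem~\ref{Algo1} --- which is legitimate here too: $W=W^K$ a.s.\ for the truncated operator, the tail term vanishes, and (B1), (B2) do upgrade to (A1), (A2) on the finite-dimensional range. The one point you only half-address is uniformity of the constant in $K$: since the truncated operator depends on $K$, applying the theorem verbatim yields a constant $C_{Q^{(K)}}$, and you must check it is bounded independently of $K$. You argue the (A1) constant is irrelevant because it only feeds the (vanishing) tail term, which is right, but you should also observe that the constant multiplying $\frac{h}{D}\sqrt{K^2(K-1)}$ depends on $Q$ only through $\|\Phi\|_{L(U,H)}$, $\|\Psi\|_{L(H,L(U,H))}$, and $\max_{i\in\mathcal{J}_K}\eta_i\le\tr Q$, all of which are unchanged or decreased by truncation; establishing this requires looking inside the proof of Theorem~\ref{Algo2} anyway, which is why the paper's direct route is the more economical one. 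With that remark added, your argument is complete.
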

 \begin{proof}
The proof of this corollary is detailed in the proof of Theorem~\ref{Algo2}.
\end{proof}
If we assume $\eta_j \leq Cj^{-\rho_Q}$ for $C>0$, $\rho_Q>1$, and all $j\in\{1,\ldots,K\}$,
we can improve the result in Theorem~\ref{Algo2} in the case $\rho_Q <3$. Precisely,
we obtain an error term
that involves the factor $K^{\frac{\rho_Q}{2}}$. The main difference
is that the alternative proof
works with the entries of the covariance matrices explicitly.
A statement
along the lines of Corollary~\ref{Algo2Lemma} can be obtained analogously.
\begin{thm}[Convergence for Algorithm~2]\label{Algo2Alternative}
	Assume that $Q$ is a nonnegative and symmetric trace class operator 
	and $(W_t)_{t \geq 0}$ is a $Q$-Wiener process.
	Further, let $\Phi \in L(U,H)_{U_0}$ with $\| \Phi  Q^{-\alpha}\|_{L_{HS}(U_0,H)}<C$,
	$\Psi\in L(H,L(Q^{-\alpha}U,H)_{U_0})$ for some $\alpha\in(0,\infty)$, i.e., 
	assumptions (A1) and (A2) are fulfilled. Then, it holds
	\begin{align*}
		&\bigg(\mathrm{E}\bigg[\Big\|\int_t^{t+h} 
		\Psi\Big( \int_t^s\Phi  \, \mathrm{d}W_r\Big) \,\mathrm{d}W_s
		- \sum_{i,j \in \mathcal{J}_K} \hat{I}_{(i,j)}^{Q,(D)}(h)
		\; \Psi\big(\Phi \tilde{e}_i, \tilde{e}_j\big)
		\Big\|_H^2\bigg]\bigg)^{\frac{1}{2}} \\ 
		&\leq C_Qh\Big(\sup_{j\in\mathcal{J} \setminus
		\mathcal{J}_K}\eta_j\Big)^{\alpha}
		+  C_Q\frac{h}{D} \Big(\min_{j\in\mathcal{J}_K}\eta_j\Big)^{-\frac{1}{2}}
	\end{align*}
	for some $C_Q>0$ and all $h>0$, $t,t+h\in[0,T]$,  $D,K\in\mathbb{N}$,
	and $\mathcal{J}_K \subset \mathcal{J}$ with $|\mathcal{J}_K|=K$.
\end{thm}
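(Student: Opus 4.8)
The plan is to split the error into the part coming from replacing $(W_t)$ by its projection $(W_t^K)$ inside the iterated integral and the part coming from Algorithm~2 applied to the iterated integral of $(W_t^K)$. For the first part I would argue exactly as in the proof of Theorem~\ref{Algo1}: inserting $\int_t^{t+h}\Psi(\int_t^s\Phi\,\mathrm{d}W_r^K)\,\mathrm{d}W_s^K$, expanding $W_t-W_t^K$ through \eqref{QSeries}, and using (A1)--(A2) together with $\Phi\tilde e_j=\eta_j^{\alpha}\,\Phi Q^{-\alpha}\tilde e_j$, one extracts a factor $\eta_j^{\alpha}\le(\sup_{j\in\mathcal J\setminus\mathcal J_K}\eta_j)^{\alpha}$ for the discarded modes while the Hilbert--Schmidt bound on $\Phi Q^{-\alpha}$ and $\Psi$ control the remainder; this produces the first summand $C_Q h(\sup_{j\in\mathcal J\setminus\mathcal J_K}\eta_j)^{\alpha}$. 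The substance is the second part.

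For the second part, by \eqref{AandI1}--\eqref{AandI3} and step~4 of Algorithm~2 the $\Delta\w_h^i\Delta\w_h^j$ and diagonal contributions are reproduced exactly, so $I_{(i,j)}^Q(h)-\hat I_{(i,j)}^{Q,(D)}(h)=A_{(i,j)}^Q(h)-\hat A_{(i,j)}^{Q,(D)}(h)$; using that $A^Q(h)$ and $\hat A^{Q,(D)}(h)$ are antisymmetric, the sum over $i,j\in\mathcal J_K$ collapses to $\sum_{i<j}\big(A_{(i,j)}^Q(h)-\hat A_{(i,j)}^{Q,(D)}(h)\big)g_{ij}$ with $g_{ij}:=\Psi(\Phi\tilde e_i,\tilde e_j)-\Psi(\Phi\tilde e_j,\tilde e_i)\in H$, that is, to the $L$-vector $\tilde R^{Q,(D)}(h)-\tilde R_a^{Q,(D)}(h)$, where $\tilde R_a^{Q,(D)}(h):=\frac{h}{2\pi}\big(\sum_{r>D}r^{-2}\big)^{1/2}\sqrt{\SQinfty}\,\Upsilon^{Q,(D)}$ is the Gaussian term added in \eqref{ApproxA}. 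Taking the $\Upsilon^{Q,(D)}$ of step~3 to be $\frac{2\pi}{h}\big(\sum_{r>D}r^{-2}H_K\SigmaQCond H_K^T\big)^{-1/2}\tilde R^{Q,(D)}(h)$ --- which has distribution $N(0_L,I_L)$ even conditionally on $Z^Q,\Delta\w_h^Q$, so this is a legitimate coupling --- and combining with \eqref{ApproxRemainder} and the definition \eqref{SigmaD} of $\SigmaDQ$ gives
\[
\tilde R^{Q,(D)}(h)-\tilde R_a^{Q,(D)}(h)=\frac{h}{2\pi}\Big(\sum_{r=D+1}^{\infty}\tfrac1{r^2}\Big)^{1/2}\big(\sqrt{\SigmaDQ}-\sqrt{\SQinfty}\big)\Upsilon^{Q,(D)} .
\]
Conditioning on $Z^Q,\Delta\w_h^Q$, the target mean-square error becomes a quadratic form in $\sqrt{\SigmaDQ}-\sqrt{\SQinfty}$ contracted with the $g_{ij}$; bounding it by $\big(\tfrac{h}{2\pi}\big)^{2}\big(\sum_{r>D}r^{-2}\big)\,\|\sqrt{\SigmaDQ}-\sqrt{\SQinfty}\|^{2}$ in a suitable norm times a weighted sum of the $\|g_{ij}\|_H^2$, and bounding that sum by a $Q$-dependent constant via (A1)--(A2), reduces everything to $\mathrm E\big[\|\sqrt{\SigmaDQ}-\sqrt{\SQinfty}\|^{2}\big]$ and the elementary estimate $\sum_{r>D}r^{-2}\le 1/D$.

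It remains to estimate the square-root difference, and here --- following the paper's indication --- I would work with the entries of $\SigmaDQ$ and $\SQinfty$ explicitly. The structural input is that $\SQinfty$ equals, up to its positive semidefinite second term in \eqref{SigmaInf2}, $2H_K(Q_K\otimes Q_K)H_K^T=2\diag(\eta_i\eta_j:i<j)$, hence $\SQinfty\succeq 2(\min_{j\in\mathcal J_K}\eta_j)^{2}I_L$, so the standard perturbation bound for the matrix square root (via the Lyapunov equation for $\sqrt{\SigmaDQ}-\sqrt{\SQinfty}$) gives $\|\sqrt{\SigmaDQ}-\sqrt{\SQinfty}\|\le c\,(\min_j\eta_j)^{-1}\|\SigmaDQ-\SQinfty\|$; the reason for not stopping here is that the $\eta_i\eta_j$-weights implicit in $\SQinfty^{-1}$, the $\eta$-homogeneity of the entries of $\SigmaDQ-\SQinfty$, and the $\eta$-decay of $\|g_{ij}\|_H$ from (A1)--(A2) partly cancel, so that the power of $\min_j\eta_j$ that actually survives in the final bound is $-1/2$ rather than $-1$. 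For $\mathrm E\big[\|\SigmaDQ-\SQinfty\|^2\big]$ I would use that the matrices $H_K\SigmaQCond H_K^T$ for $r>D$ are, conditionally on $\Delta\w_h^Q$, i.i.d.\ with conditional mean $\SQinfty$ (cf.\ \eqref{SigmaInf}); thus $\SigmaDQ-\SQinfty$ is a normalized weighted average of centred terms and its conditional variance carries the factor $\big(\sum_{r>D}r^{-4}\big)\big(\sum_{r>D}r^{-2}\big)^{-2}\le c/D$, while the second moment of a single term is finite and controlled by $\operatorname{tr}(Q)$ and $\operatorname{tr}(Q^2)$ alone --- not by $K$ --- since the relevant moments of $Z_r^Q$ and $\Delta\w_h^Q$ are governed by the eigenvalues and $Q$ is trace class; this also yields the convergence $\mathrm E[\|\SigmaDQ-\SQinfty\|_F^2]\to 0$ stated before the theorem. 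Assembling the estimates and taking a square root gives the second summand $C_Q\frac{h}{D}(\min_{j\in\mathcal J_K}\eta_j)^{-1/2}$.

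The main obstacle is the entrywise bookkeeping in this last step: one must exploit the $\eta$-homogeneity of $\SigmaDQ$ and $\SQinfty$ --- which is exactly what separates this argument from the matrix-norm proof behind Theorem~\ref{Algo2} --- so as to improve the a priori factor $\lambda_{\min}(\SQinfty)^{-1/2}\le c(\min_j\eta_j)^{-1}$ to $(\min_j\eta_j)^{-1/2}$, while keeping every constant dependent on $Q$ only (through its traces) and independent of $K$ and $D$; a minor additional point is the a.s.\ invertibility of $\sum_{r>D}r^{-2}H_K\SigmaQCond H_K^T$ needed for the coupling to be well defined.
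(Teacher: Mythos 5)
Your skeleton matches the paper's: the same splitting into projection error and algorithm error, the same reduction (via the coupling of $\Upsilon^{Q,(D)}$ with $\tilde{R}^{Q,(D)}(h)$ in \eqref{ApproxRemainder} and \eqref{ApproxA}) to $\mathrm{E}\big[\|\sqrt{\SigmaDQ}-\sqrt{\SQinfty}\|_F^2\big]$, the same $D$-scaling from $\big(\sum_{r>D}r^{-4}\big)\big(\sum_{r>D}r^{-2}\big)^{-1}\leq 2/(3D^2)$, and the same observation that the conditional second moment of a single term $H_K\SigmaQCond H_K^T$ is controlled by traces of $Q$ rather than by $K$. But the decisive step --- why the final bound carries $(\min_j\eta_j)^{-1/2}$ rather than $(\min_j\eta_j)^{-1}$ --- is precisely the one you leave open, and the mechanism you sketch for closing it is not the paper's and is not viable.

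The paper obtains no cancellation from the vectors $g_{ij}=\Psi(\Phi\tilde{e}_i,\tilde{e}_j)$: in the reduction \eqref{Eqn-Frobenius} these are already bounded by a constant before the matrix analysis begins, the announced analogue of Corollary~\ref{Algo2Lemma} is asserted under (B1)--(B2) where no $\eta$-decay of $\|g_{ij}\|_H$ exists at all, and under (A1)--(A2) the available decay is only of order $\eta_i^{\alpha}\eta_j^{\alpha}$ with $\alpha>0$ arbitrary (possibly tiny), so it cannot systematically absorb half a power of $\min_j\eta_j$. What the paper actually uses is Lemma~\ref{CholEV} (quoted from Wiktorsson), which bounds the \emph{squared} Frobenius norm of the square-root difference by $\lambda_{\min}^{-1/2}\,\|\SigmaDQ-\SQinfty\|_F^2$ --- note $\lambda_{\min}^{-1/2}$, not the $\lambda_{\min}^{-1}$ obtained by squaring the standard Sylvester-equation bound you invoke. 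Combined with the spectral lower bound $\lambda_{\min}(\SQinfty)\geq 2\eta_{K-1}\eta_K\geq 2\big(\min_j\eta_j\big)^2$, which the paper reads off from the explicit entries via the decomposition $\SQinfty=2\eta_{K-1}\eta_K I_L+\widehat{\SQinfty}$ with $\widehat{\SQinfty}$ positive semidefinite, this yields the factor $(\min_j\eta_j)^{-1}$ on the mean-square error and hence $(\min_j\eta_j)^{-1/2}$ after taking the square root. Without this sharper form of the square-root perturbation inequality (or a concrete substitute exploiting the $\eta$-homogeneity you allude to), your argument only delivers $C_Q\frac{h}{D}\big(\min_j\eta_j\big)^{-1}$, which falls short of the claimed estimate; the proposal therefore has a genuine gap at the heart of the proof.
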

\begin{proof}
 For a proof, we again refer to Section~\ref{Sec:Proofs}.
\end{proof}
\begin{remark}
 Note that if $(W_t)_{t\geq 0}$ is a cylindrical Wiener process,
 we get the same estimate \eqref{ErrDoubleW} as in the finite dimensional case.
\end{remark}
In general, for $h=\frac{T}{M}$, we obtain convergence of 
this algorithm for $K,M\rightarrow \infty$ if
we choose $D>(\min_{j\in\mathcal{J}_K}\eta_j)^{-\frac{1}{2}}h^{1-\theta}$ or, respectively,
$D>\sqrt{K^2(K-1)}h^{1-\theta}$ 
for some $\theta >0$.
For Algorithm~1, we require $D> h^{2-2\theta}$, instead. 
However, we need a more careful choice of
$D$ to maintain the order of convergence in the mean-square sense in $h$ 
for a given numerical scheme
-- we call this convergence rate $\gamma>0$.
Precisely, we have to choose $D \geq h^{1-2\gamma}$ for Algorithm~1 and 
$D \geq h^{\frac{1}{2}-\gamma}(\min_{j\in\mathcal{J}_K}\eta_j)^{-\frac{1}{2}}$, respectively,
$D \geq h^{\frac{1}{2}-\gamma}\sqrt{K^2(K-1)}$ for Algorithm~2.
\section{Proofs}\label{Sec:Proofs}
\subsection{Convergence for Algorithm~1}
\begin{proof}[Proof of Theorem~\ref{Algo1}] 
We determine the error resulting from the 
approximation of the iterated stochastic integral \eqref{DoubleIntSPDE} by Algorithm~1
which also contains the projection of the $Q$-Wiener process in \eqref{QSeriesK}.
Below, we employ error estimates of the following form several times, see also the proof
in \cite{MR3320928}.
It holds
\begin{equation}\label{ErrorWK}
 \begin{split}
   \mathrm{E}\bigg[\Big\|\int_{t}^{t+h} \Phi  \, \mathrm{d}(W_s-W_s^K) \Big\|_H^2\bigg]
   &= \mathrm{E}\bigg[\Big\| \sum_{j\in\mathcal{J}\setminus{\mathcal{J}_K} }
   \int_{t}^{t+h}\Phi \sqrt{\eta_j} \tilde{e}_j \, \mathrm{d}\beta_s^j\Big\|_H^2\bigg] \\ 
   & = \sum_{j\in\mathcal{J}\setminus{\mathcal{J}_K}}\eta_j\int_{t}^{t+h}
   \mathrm{E}\Big[\big\|\Phi Q^{-\alpha}Q^{\alpha}\tilde{e}_j \big\|_H^2\Big]\,
   \mathrm{d}s \\
    & = \sum_{j\in\mathcal{J}\setminus{\mathcal{J}_K} }\eta_j^{2\alpha+1}\int_{t}^{t+h}
    \mathrm{E}\Big[\big\|\Phi Q^{-\alpha}\tilde{e}_j \big\|_H^2\Big]\, \mathrm{d}s  \\
    &\leq \Big(\sup_{j\in \mathcal{J}\setminus\mathcal{J}_K}\eta_j\Big)^{2\alpha} \int_{t}^{t+h}
    \mathrm{E}\Big[\sum_{j \in \mathcal{J}}\eta_j\big\|\Phi Q^{-\alpha}\tilde{e}_j \big\|_H^2\Big]\,
    \mathrm{d}s \\
    & = \Big(\sup_{j\in \mathcal{J}\setminus\mathcal{J}_K}\eta_j\Big)^{2\alpha}\int_{t}^{t+h}
    \mathrm{E}\Big[\big\|\Phi Q^{-\alpha}\big\|_{L_{HS}(U_0,H)}^2\Big]\, \mathrm{d}s,
    \end{split}
\end{equation}
where we used the expression
\begin{equation*}
  \mathrm{d} (W_s-W_s^K)
  = \sum_{j\in\mathcal{J}\setminus{\mathcal{J}_K} }\sqrt{\eta_j}\tilde{e}_j\, \mathrm{d}\beta_s^j
\end{equation*}
for all $s\in[0,T]$, $K\in\mathbb{N}$ in the first step. 
We fix some arbitrary $h>0$, $t, t+h \in [0,T]$, and $K\in\mathbb{N}$ throughout the proof
and decompose the error into several parts
\begin{equation}\label{SplitDouble}
  \begin{split}
  &\mathrm{E}\bigg[\Big\|\int_t^{t+h} \Psi\Big( \int_t^s  \Phi \,\mathrm{d}W_r\Big) \,\mathrm{d}W_s
  - \sum_{i,j \in \mathcal{J}_K} \bar{I}_{(i,j)}^{Q,(D)}(h)
  \; \Psi\big(\Phi \tilde{e}_i, \tilde{e}_j\big) \Big\|_H^2\bigg]\\
  & \leq C\bigg(\mathrm{E}\bigg[\Big\|\int_t^{t+h} \Psi
  \Big( \int_t^s \Phi  \,\mathrm{d}W_r\Big) \,\mathrm{d}W_s
  -\int_t^{t+h} \Psi 
  \Big( \int_t^s\Phi \,\mathrm{d}W_r^K\Big) \,\mathrm{d}W_s\Big\|_H^2\bigg] \\
  &\quad +\mathrm{E}\bigg[\Big\|\int_t^{t+h} \Psi 
  \Big( \int_t^s \Phi  \,\mathrm{d}W_r^K\Big) \,\mathrm{d}W_s
  -\int_t^{t+h} \Psi 
  \Big( \int_t^s\Phi \,\mathrm{d}W_r^K\Big) \,\mathrm{d}W_s^K\Big\|_H^2\bigg] \\
  &\quad+\mathrm{E}\bigg[\Big\|\int_t^{t+h} \Psi 
  \Big( \int_t^s  \Phi \,\mathrm{d}W_r^K\Big) \,\mathrm{d}W_s^K
  - \sum_{i,j \in \mathcal{J}_K} \bar{I}_{(i,j)}^{Q,(D)}(h)
  \; \Psi \big(\Phi \tilde{e}_i, \tilde{e}_j\big)\Big\|_H^2\bigg]\bigg).
  \end{split}
\end{equation}
For now, we neglect the last term in \eqref{SplitDouble} and estimate the other parts.
By It\^{o}'s isometry, the properties (A1) and (A2) of the operators $\Phi $, $\Psi $,
and estimate \eqref{ErrorWK}, we get 
\begin{align*}
  &\mathrm{E}\bigg[\Big\|\int_t^{t+h} \Psi 
  \Big( \int_t^s \Phi  \,\mathrm{d}W_r\Big) \,\mathrm{d}W_s
  -\int_t^{t+h} \Psi 
  \Big( \int_t^s\Phi \,\mathrm{d}W_r^K\Big) \,\mathrm{d}W_s\Big\|_H^2\bigg] \nonumber\\
  &\quad+\mathrm{E}\bigg[\Big\|\int_t^{t+h} \Psi 
  \Big( \int_t^s \Phi  \,\mathrm{d}W_r^K\Big) \,\mathrm{d}W_s
  -\int_t^{t+h} \Psi 
  \Big( \int_t^s\Phi \,\mathrm{d}W_r^K\Big) \,\mathrm{d}W_s^K\Big\|_H^2\bigg] \nonumber\\
  & \leq \int_t^{t+h} \mathrm{E}\bigg[\Big\|\Psi 
  \Big( \int_t^s \Phi  \,\mathrm{d}\big(W_r-W_r^K\big)\Big)
  \Big\|_{L_{HS}(U_0,H)}^2\bigg]\,\mathrm{d}s \nonumber\\
  &\quad+\Big(\sup_{j\in\mathcal{J}\setminus \mathcal{J}_K} \eta_j\Big)^{2\alpha}
  \int_t^{t+h} \mathrm{E}\bigg[\Big\|\Psi 
  \Big( \int_t^s \Phi  \,\mathrm{d}W_r^K\Big)Q^{-\alpha}
  \Big\|_{L_{HS}(U_0,H)}^2\bigg] \,\mathrm{d}s \nonumber \\
  & \leq C \int_t^{t+h} \mathrm{E}\bigg[\Big\| \int_t^s \Phi  \,\mathrm{d}\big(W_r-W_r^K\big)
  \Big\|_{H}^2\bigg]\,\mathrm{d}s 
  +\Big(\sup_{j\in\mathcal{J}\setminus \mathcal{J}_K} \eta_j\Big)^{2\alpha}
  \int_t^{t+h} \mathrm{E}\bigg[\Big\|\int_t^s \Phi \,\mathrm{d}W_r^K\Big\|_{H}^2
  \bigg] \,\mathrm{d}s \nonumber \\
  & \leq C\Big(\sup_{j\in\mathcal{J}\setminus \mathcal{J}_K}
  \eta_j\Big)^{2\alpha} \int_t^{t+h} \int_t^s
  \mathrm{E}\Big[\big\| \Phi  Q^{-\alpha}\big\|_{L_{HS}(U_0,H)}^2\Big]\,\mathrm{d}r\,\mathrm{d}s 
  +  \Big(\sup_{j\in\mathcal{J}\setminus \mathcal{J}_K} \eta_j\Big)^{2\alpha}
  \int_t^{t+h} \int_t^s C \,\mathrm{d}r \,\mathrm{d}s.\nonumber
\end{align*}
Finally, assumption (A1) yields 
\begin{align}\label{W-WK}
  &\mathrm{E}\bigg[\Big\|\int_t^{t+h} \Psi 
  \Big( \int_t^s \Phi  \,\mathrm{d}W_r\Big) \,\mathrm{d}W_s
  -\int_t^{t+h} \Psi 
  \Big( \int_t^s\Phi \,\mathrm{d}W_r^K\Big) \,\mathrm{d}W_s\Big\|_H^2\bigg] \nonumber\\
  &\quad+\mathrm{E}\bigg[\Big\|\int_t^{t+h} \Psi 
  \Big( \int_t^s \Phi  \,\mathrm{d}W_r^K\Big) \,\mathrm{d}W_s
  -\int_t^{t+h} \Psi 
  \Big( \int_t^s\Phi \,\mathrm{d}W_r^K\Big) \,\mathrm{d}W_s^K\Big\|_H^2\bigg] \nonumber\\
  & \leq C\Big(\sup_{j\in\mathcal{J}\setminus \mathcal{J}_K} \eta_j\Big)^{2\alpha} h^2.
\end{align}
Now, we concentrate on the last term in \eqref{SplitDouble}; this part also proves
Corollary~\ref{Algo1Lemma}.
We get
\begin{align*}
  &\mathrm{E}\bigg[\Big\|\int_t^{t+h} \Psi 
  \Big( \int_t^s \Phi  \,\mathrm{d}W_r^K\Big)\, \mathrm{d}W_s^K
  -\sum_{i,j \in \mathcal{J}_K} \bar{I}_{(i,j)}^{Q,(D)}(h)
                           \; \Psi \big(\Phi \tilde{e}_i, \tilde{e}_j\big) \Big\|_H^2\bigg] \\
  &= \mathrm{E}\bigg[\Big\| \sum_{i,j \in \mathcal{J}_K} I_{(i,j)}^Q(h)
        \; \Psi \big(\Phi \tilde{e}_i, \tilde{e}_j\big)
        -\sum_{i,j \in \mathcal{J}_K} \bar{I}_{(i,j)}^{Q,(D)}(h)
        \; \Psi \big(\Phi \tilde{e}_i, \tilde{e}_j\big) \Big\|_H^2\bigg] \\
  &=  \sum_{i,j \in \mathcal{J}_K} \mathrm{E}\Big[\big(I_{(i,j)}^Q(h)
    -\bar{I}_{(i,j)}^{Q,(D)}(h)\big)^2\Big]\big\|\Psi \big(\Phi \tilde{e}_i, \tilde{e}_j\big)\big\|_H^2
\end{align*}
as $\mathrm{E}\Big[ \big(I_{(i,j)}^Q(h)-\bar{I}_{(i,j)}^{Q,(D)}(h) \big) 
\big(I_{(k,l)}^Q(h)-\bar{I}_{(k,l)}^{Q,(D)}(h) \big) \Big] =0$ for all
$i,j,k,l \in\mathcal{J}_K$ with $(i,j) \neq (k,l)$, $K\in\mathbb{N}$,
see~\cite{MR1214374}. 
By assumptions (B1) and (B2), we obtain
\begin{align*}
  &\mathrm{E}\bigg[\Big\|\int_t^{t+h} \Psi 
  \Big( \int_t^s \Phi  \,\mathrm{d}W_r^K\Big)\, \mathrm{d}W_s^K
  -\sum_{i,j \in \mathcal{J}_K} \bar{I}_{(i,j)}^{Q,(D)}(h)
  \; \Psi \big(\Phi \tilde{e}_i, \tilde{e}_j\big) \Big\|_H^2\bigg] \\
  &\leq \sum_{i,j \in \mathcal{J}_K} \mathrm{E}\Big[\big( I_{(i,j)}^Q(h)
  -\bar{I}_{(i,j)}^{Q,(D)}(h)\big)^2\Big]\big\|\Psi\big\|_{L(H,L(U,H))}^2 \big\|\Phi\big\|_{L(U,H)}^2 \\
  &\leq C \sum_{i,j \in \mathcal{J}_K} \mathrm{E}\Big[\big( I_{(i,j)}^Q(h)
  -\bar{I}_{(i,j)}^{Q,(D)}(h)\big)^2\Big].
\end{align*}
Due to the relations \eqref{AandI1}-\eqref{AandI3}, it is enough to examine $\tilde{A}^Q(h)$
and $\tilde{A}^{Q,(D)}(h)$ which implies
\begin{align}\label{ErrorGleich}
  &\mathrm{E}\bigg[\Big\|\int_t^{t+h} \Psi 
  \Big( \int_t^s \Phi  \,\mathrm{d}W_r^K\Big)\, \mathrm{d}W_s^K
    -\sum_{i,j \in \mathcal{J}_K} \bar{I}_{(i,j)}^{Q,(D)}(h)
      \; \Psi \big(\Phi \tilde{e}_i, \tilde{e}_j\big) \Big\|_H^2\bigg] \nonumber\\
  &\leq 2C \sum_{i=1}^L \mathrm{E}\Big[\big( \tilde{A}_{(i)}^Q(h)
    -\tilde{A}_{(i)}^{Q,(D)}(h)\big)^2\Big].
\end{align}
By \eqref{DoubleLevy}, \eqref{AreaIntKP}, and the properties of $a_r^j$, $b_r^j$ for $r\in\mathbb{N}_0$,
$j \in\mathcal{J}_K$, $K\in\mathbb{N}$, we obtain
\begin{align*}
  &\mathrm{E}\bigg[\Big\|\int_t^{t+h} \Psi 
  \Big( \int_t^s \Phi  \,\mathrm{d}W_r^K\Big)\, \mathrm{d}W_s^K
  -\sum_{i,j \in \mathcal{J}_K} \bar{I}_{(i,j)}^{Q,(D)}(h)
   \; \Psi \big(\Phi \tilde{e}_i, \tilde{e}_j\big) \Big\|_H^2\bigg] \nonumber\\
  & \leq 2C \sum_{\substack{i,j \in \mathcal{J}_K \\ i<j}}
     \mathrm{E}\bigg[\Big(  \pi \sum_{r=D+1}^{\infty} r
       \Big(a_r^i \Big( b^j_r-\frac{1}{\pi r} \Delta\w_h^j \Big) 
       - \Big(b^i_r-\frac{1}{\pi r} \Delta\w_h^i \Big) a^j_r \Big) \Big)^2\bigg] \\
  &= 2C \pi^2 \sum_{\substack{i,j \in \mathcal{J}_K \\ i<j}}
     \sum_{r=D+1}^{\infty} r^2 \, \mathrm{E}\Big[\Big(a_r^i b^j_r - a_r^i \frac{1}{\pi r} \Delta \w_h^j \Big)^2
     + \Big(b^i_r a^j_r - \frac{1}{\pi r} \Delta \w_h^i a^j_r \Big)^2 \Big] \\
  &= 3 C \frac{h^2}{\pi^2} \sum_{\substack{i,j \in \mathcal{J}_K \\ i<j}}
      \eta_i \eta_j \sum_{r=D+1}^{\infty} \frac{1}{r^2}
  \leq 3C \frac{h^2}{\pi^2} \, (\tr Q)^2 \sum_{r=D+1}^{\infty} \frac{1}{r^2}
\end{align*}
for all $D\in\mathbb{N}$.
As in \cite{MR1178485}, we finally estimate
\begin{equation*}
  \sum_{r=D+1}^{\infty}\frac{1}{r^2} \leq \int_D^{\infty}\frac{1}{s^2}\,\mathrm{d}s =\frac{1}{D}
\end{equation*}
and, in total, we obtain for this part
\begin{align}\label{ErrorDoubleK}
  &\mathrm{E}\bigg[\Big\|\int_t^{t+h} \Psi
  \Big( \int_t^s \Phi  \,\mathrm{d}W_r^K\Big)\, \mathrm{d}W_s^K
  -\sum_{i,j \in \mathcal{J}_K} \bar{I}_{(i,j)}^{Q,(D)}(h)
  \; \Psi \big(\Phi \tilde{e}_i, \tilde{e}_j\big) \Big\|_H^2\bigg] 
  \leq 3C \,(\tr Q )^2 \frac{h^2}{D\pi^2}
\end{align}
for all $h>0$, $t, t+h\in[0,T]$, $D,K\in\mathbb{N}$.
\end{proof}
\subsection{Cholesky Decomposition}
\begin{proof}[Proof of Theorem~\ref{CholeskyDecomp}] 
It holds $\SinftyQ = \tilde{Q}_K \Sinfty \tilde{Q}_K^T$, where $\Sinfty$
is given by \eqref{SigmaInf} for $Q_K=I_K$, and
$\Delta \w_h^I = Q_K^{-1/2} \Delta \w_h^Q = \sqrt{h} V$. 
We assume that
\begin{align*}
  \sqrt{\SinftyQ}
  &= \tilde{Q}_K \frac{\Sinfty+2\sqrt{1+V^T V} I_{K^2} }
  {\sqrt{2}\big(1+\sqrt{1+V^T V}\big)}
  = \tilde{Q}_K \frac{\Sinfty+2\sqrt{1+\frac{1}{h} {\Delta \w_h^I}^T \Delta \w_h^I 
  }  I_{K^2} }
  {\sqrt{2}\Big(1+\sqrt{1+\frac{1}{h} {\Delta \w_h^I}^T \Delta \w_h^I
  }\Big)}
\end{align*}
%
%
holds and compute for $a := \sqrt{1+\frac{1}{h} {\Delta \w_h^I}^T \Delta \w_h^I}$
the expression
\begin{align*}
   &\sqrt{\SinftyQ} \sqrt{\SinftyQ}^T
   = \frac{\tilde{Q}_K \Sinfty \big(\Sinfty \big)^T\tilde{Q}_K^T 
   +2a\tilde{Q}_K \Sinfty \tilde{Q}_K^T
   +2a\tilde{Q}_K \big( \Sinfty \big)^T\tilde{Q}_K^T
   +4 a^2\tilde{Q}_K\tilde{Q}_K^T} {2(1+a)^2}\\
   &= \frac{\tilde{Q}_K \Sinfty \big( \Sinfty \big)^T\tilde{Q}_K^T 
   -(2+2a^2)\tilde{Q}_K \Sinfty \tilde{Q}_K^T
   +4 a^2\tilde{Q}_K\tilde{Q}_K^T} {2(1+a)^2}
   + \frac{2+4a+2a^2}{2(1+a)^2} \tilde{Q}_K \Sinfty \tilde{Q}_K^T \\
   &= \frac{\tilde{Q}_K \Sinfty \big( \Sinfty \big)^T\tilde{Q}_K^T 
   -(2+2a^2)\tilde{Q}_K \Sinfty \tilde{Q}_K^T
   +4 a^2\tilde{Q}_K \tilde{Q}_K^T} {2(1+a)^2}
   + \SinftyQ.
\end{align*}
The idea in \cite{MR1843055} is to show that the first term, which slightly differs in \cite{MR1843055},
is zero, i.e.,
\begin{align*}  
  \tilde{Q}_K \big(\Sinfty \big(\Sinfty \big)^T
  -(2+2a^2) \Sinfty +4 a^2 I_L \big)\tilde{Q}_K^T &= 0_{L\times L}
  \\
  \Leftrightarrow \quad \Sinfty\big(\Sinfty\big)^T
  -(2+2a^2)\Sinfty+4 a^2 I_L &= 0_{L\times L},
\end{align*}
which proves that the expression for $\sqrt{\SinftyQ} $ is correct.
In the proof of Theorem 4.1 in \cite{MR1843055}, the author shows
\begin{equation*}
   \Sinfty\big(\Sinfty\big)^T
  -(2+2a^2)\Sinfty +4 a^2 I_L = 0_{L\times L},
\end{equation*}
arguing by the eigenvalues of the minimal polynomial of this equation. We do not repeat this
ideas here but refer to  \cite{MR1843055} for further details. 
\end{proof}
\subsection{Convergence for Algorithm~2}
\begin{proof}[Proof of Theorem~\ref{Algo2}] 
We split the error term as in the proof of Theorem~\ref{Algo1}, 
see equation \eqref{SplitDouble},
and obtain the same expression \eqref{W-WK} 
from the approximation of the $Q$-Wiener process
by $(W^K_{t})_{t\in[0,T]}$, $K\in\mathbb{N}$.
Further, we get as in equation \eqref{ErrorGleich}
\begin{align*}
  &\mathrm{E}\bigg[\Big\|\int_t^{t+h} \Psi
  \Big( \int_t^s \Phi \, \mathrm{d}W_r^K\Big) \,\mathrm{d}W_s^K
  - \sum_{i,j \in \mathcal{J}_K} \hat{I}_{(i,j)}^{Q,(D)}(h)
  \; \Psi \big(\Phi \tilde{e}_i, \tilde{e}_j\big)\Big\|_H^2\bigg] \\
  &\leq 2 C \sum_{i=1}^L\mathrm{E}\Big[\big(\tilde{A}_{(i)}^Q(h)
  -\hat{A}_{(i)}^{Q,(D)}(h)\big)^2\Big]
\end{align*}
for all $h>0$, $t, t+h\in[0,T]$, $K\in\mathbb{N}$.
The following part also proves Corollary~\ref{Algo2Lemma}.
Let $\|\cdot\|_F$ denote the Frobenius norm.
With the expressions for $\tilde{R}^{Q,(D)}(h)$ in \eqref{ApproxRemainder},
with $\Sigma^{Q,(D)} = \tilde{Q}_K \Sigma^{I,(D)} \tilde{Q}_K^T$,
$\SinftyQ = \tilde{Q}_K \Sinfty \tilde{Q}_K^T$
where $\Sigma^{I,(D)}$, $\Sinfty$ are given by \eqref{SigmaD} and \eqref{SigmaInf} for $Q_K=I_K$, respectively,
and the definition of the algorithm \eqref{ApproxA}, we obtain
\begin{align} \label{Eqn-Frobenius}
  &\mathrm{E}\bigg[\Big\|\int_t^{t+h} \Psi 
  \Big( \int_t^s\Phi  \, \mathrm{d}W_r^K\Big) \,\mathrm{d}W_s^K
  - \sum_{i,j \in \mathcal{J}_K} \hat{I}_{(i,j)}^{Q,(D)}(h)
  \; \Psi \big(\Phi \tilde{e}_i, \tilde{e}_j\big)\Big\|_H^2\bigg] \nonumber \\
  & \leq2C \sum_{i=1}^L \mathrm{E}\bigg[\Big( \Big( \tilde{R}^{Q,(D)}(h)
  -\frac{h}{2\pi} \Big(\sum_{r=D+1}^{\infty} \frac{1}{r^2}\Big)^{\frac{1}{2}}
  \sqrt{\SinftyQ}\Upsilon^{Q,(D)}\Big)_{(i)}\Big)^2\bigg]\nonumber\\
  &=2C  \sum_{i=1}^L \mathrm{E}\bigg[\Big(  \Big(\frac{h}{2\pi}
  \Big(\sum_{r=D+1}^{\infty}\frac{1}{r^2}
  H_K \SigmaCondQ H_K^T \Big)^{\frac{1}{2}}\Upsilon^{Q,(D)} \nonumber \\
  &\quad -\frac{h}{2\pi} \Big(\sum_{r=D+1}^{\infty}
  \frac{1}{r^2}\Big)^{\frac{1}{2}} \tilde{Q}_K
  \sqrt{\Sinfty} \Upsilon^{Q,(D)}\Big)_{(i)}\Big)^2\bigg]\nonumber\\
  &= \frac{Ch^2}{2\pi^2} \Big(\sum_{r=D+1}^{\infty}\frac{1}{r^2}\Big)
  \sum_{i=1}^L\nonumber\\
  &\quad \cdot \mathrm{E}\Bigg[\bigg( \bigg(\Big( \, \Big(
  \sum_{r=D+1}^{\infty}\frac{1}{r^2} \Big)^{-\frac{1}{2}} \tilde{Q}_K
  \Big(\sum_{r=D+1}^{\infty}\frac{1}{r^2} H_K \SigmaICond H_K^T\Big)^{\frac{1}{2}}
  -\tilde{Q}_K \sqrt{\Sinfty} \Big) \Upsilon^{Q,(D)}\bigg)_{(i)}\bigg)^2\Bigg]
  \nonumber\\
  &= C \frac{h^2}{2\pi^2} \Big(\sum_{r=D+1}^{\infty}\frac{1}{r^2}\Big)
  \sum_{i=1}^L\mathrm{E}\Big[\Big( \Big( \Big( \tilde{Q}_K \big(\sqrt{\Sigma^{I,(D)}}
  -\sqrt{\Sinfty}\big) \Big)\Upsilon^{Q,(D)}\Big)_{(i)}\Big)^2\Big] \nonumber \\
  &= C \frac{h^2}{2\pi^2}  \Big(\sum_{r=D+1}^{\infty}\frac{1}{r^2}\Big)
  \sum_{i=1}^L  \mathrm{E}\bigg[\mathrm{E}\Big[\Big( \Big( \Big(\tilde{Q}_K \big(\sqrt{\Sigma^{I,(D)}}
  -\sqrt{\Sinfty} \big) \Big)
  \Upsilon^{Q,(D)}\Big)_{(i)}\Big)^2\Big\vert Z^Q,\Delta \w_h^Q\Big]\bigg]\nonumber \\
  &= C \frac{h^2}{2\pi^2}  \Big(\sum_{r=D+1}^{\infty}\frac{1}{r^2}\Big)
  \,\mathrm{E}\Big[\big\| \tilde{Q}_K \big(\sqrt{\Sigma^{I,(D)}}
  -\sqrt{\Sinfty}\big) \big\|_F^2\Big]
\end{align}
for all $h>0$, $t, t+h\in[0,T]$, $D,K\in\mathbb{N}$. Here, we used the fact that 
${\Upsilon^{Q,(D)}}_{|Z^Q,\Delta \w_h^Q}\sim N(0_{L}, I_{L})$ for  $h>0$, $D,L\in\mathbb{N}$
and that $\tilde{Q}_K$ is a diagonal matrix.
Precisely, for $G:= \sqrt{\Sigma^{I,(D)}}-\sqrt{\Sinfty}$
with $G:= (g_{ij})_{1\leq i,j\leq L}$ and 
$\Upsilon^{Q,(D)} = (\Upsilon^{Q,(D)}_j)_{1\leq j\leq L}$, we compute
\begin{align*}
  &\sum_{i=1}^L \mathrm{E} \bigg[ \mathrm{E} \Big[ \Big( \Big( \Big( \tilde{Q}_K \big(\sqrt{\Sigma^{I,(D)}}
  -\sqrt{\Sinfty} \big) \Big)\Upsilon^{Q,(D)} \Big)_{(i)} \Big)^2
  \Big\vert Z^Q,\Delta \w_h^Q\Big]\bigg]\\ 
  &= \sum_{i=1}^L \mathrm{E} \bigg[ \mathrm{E} \Big[ \Big( \big( \tilde{Q}_K G
  \Upsilon^{Q,(D)} \big)_{(i)} \Big)^2
  \Big\vert Z^Q,\Delta \w_h^Q\Big]\bigg] \\
  &= \sum_{i=1}^L \mathrm{E}\bigg[\mathrm{E}\Big[ 
  \Big( \sum_{j=1}^L (\tilde{Q}_K)_{ii} \, g_{ij} \, \Upsilon^{Q,(D)}_j
  \Big)^2 \Big\vert Z^Q,\Delta \w_h^Q\Big]\bigg] \\
  &= \sum_{i,j=1}^L \mathrm{E}\Big[ (\tilde{Q}_K)_{ii}^2 \, g_{ij}^2 \Big] 
  = \mathrm{E}\Big[ \big\| \tilde{Q}_K G \big\|_F^2\Big] \\
  &= \mathrm{E} \Big[ \big\| \tilde{Q}_K \big( \sqrt{\Sigma^{I,(D)}}
  -\sqrt{\Sinfty}\big) \big\|_F^2\Big].
\end{align*}
In order to relate to the proof in \cite{MR1843055}, we write
\begin{align*}
  \mathrm{E} \Big[ \big\| \tilde{Q}_K \big(\sqrt{\Sigma^{I,(D)}}
  -\sqrt{\Sinfty}\big) \big\|_F^2\Big] 
  &=\mathrm{E} \Big[ \sum_{i,j=1}^L (\tilde{Q}_K)_{ii}^2 g_{ij}^2\Big] 
  \leq \max_{1\leq i \leq K} \eta_i^2 \, \mathrm{E}\Big[ \sum_{i,j=1}^L g_{ij}^2\Big]
  \\ 
  &\leq \max_{1\leq i \leq K} \eta_i^2 \, \mathrm{E}\Big[\big\| \sqrt{\Sigma^{I,(D)}}-\sqrt{\Sinfty}\big\|_F^2\Big].
\end{align*}
In total, we obtain
\begin{align*}
  &\mathrm{E}\bigg[\Big\|\int_t^{t+h} \Psi 
  \Big( \int_t^s\Phi  \, \mathrm{d}W_r^K\Big) \,\mathrm{d}W_s^K
  - \sum_{i,j \in \mathcal{J}_K} \hat{I}_{(i,j)}^{Q,(D)}(h)
  \; \Psi \big(\Phi \tilde{e}_i, \tilde{e}_j\big)\Big\|_H^2\bigg] \\
  & \leq C \max_{1\leq i \leq K} \eta_i^2 \, \frac{h^2}{2\pi^2}  \Big(\sum_{r=D+1}^{\infty}\frac{1}{r^2}\Big)
  \,\mathrm{E}\Big[\big\| \sqrt{\Sigma^{I,(D)}}-\sqrt{\Sinfty}\big\|_F^2\Big].
\end{align*}
Now, we can insert the results obtained in the proofs of \cite[Theorem~4.1, Theorem~4.2, Theorem~4.3]{MR1843055};
this yields 
\begin{align*}
  &\mathrm{E}\bigg[\Big\|\int_t^{t+h} \Psi 
  \Big( \int_t^s\Phi  \, \mathrm{d}W_r^K\Big) \,\mathrm{d}W_s^K
  - \sum_{i,j \in \mathcal{J}_K} \hat{I}_{(i,j)}^{Q,(D)}(h)
  \; \Psi \big(\Phi \tilde{e}_i, \tilde{e}_j\big)\Big\|_H^2\bigg] \\
  & \leq C \max_{1\leq i \leq K} \eta_i^2 \,
  \frac{h^2 K(K-1) \big(K+4 
  \mathrm{E}\big[V^T V\big]
  \big)}{12 \pi^2 D^2}
  \leq C \frac{5 h^2 K^2 (K-1)}{12 \pi^2 D^2}
\end{align*}
for all $h>0$, $t, t+h\in[0,T]$, $D,K\in\mathbb{N}$ where $V = h^{-1/2} Q_K^{-1/2} \Delta \w_h^Q$.
\end{proof}
\vspace{0.5cm}
\begin{proof}[Proof of Theorem~\ref{Algo2Alternative}] 
We split the error term as in the proof of Theorem~\ref{Algo1} and Theorem~\ref{Algo2}, 
see equation \eqref{SplitDouble},
and obtain the same expression \eqref{W-WK} 
from the approximation of the $Q$-Wiener process
by $(W^K_{t})_{t\in[0,T]}$, $K\in\mathbb{N}$.
Moreover, as in the previous proof, we get from \eqref{Eqn-Frobenius} that
\begin{align}\label{SqrtSigma}
  &\mathrm{E}\bigg[\Big\|\int_t^{t+h} \Psi 
  \Big( \int_t^s\Phi  \, \mathrm{d}W_r^K\Big) \,\mathrm{d}W_s^K
  - \sum_{i,j \in \mathcal{J}_K} \hat{I}_{(i,j)}^{Q,(D)}(h)
  \; \Psi \big(\Phi \tilde{e}_i, \tilde{e}_j\big)\Big\|_H^2\bigg] \nonumber \\
  & \leq C \frac{h^2}{2\pi^2}  \Big(\sum_{r=D+1}^{\infty}\frac{1}{r^2}\Big)
  \,\mathrm{E}\Big[\big\| \sqrt{\SigmaDQ}-\sqrt{\SinftyQ} \big\|_F^2\Big]
\end{align}
for all $h>0$, $t, t+h\in[0,T]$, $D,K\in\mathbb{N}$. 
In this alternative proof, we consider the elements of the matrices $\SigmaDQ$ and $\SinftyQ$ explicitly.
Therefore, we define the index set of interest as 
$\mathcal{I}_A  = ((1,2),\ldots,(1,K),\ldots,(l,l+1),\ldots,(l,K),\ldots,(K-1,K)) 
= (I_1,\ldots,I_{L})$ which selects the same entries of some matrix as the matrix transformation 
by $H_K$ given in \eqref{SelectionMatrix}.
The $L \times L$-matrix $H_K \Sigma^Q(V_1^Q)_{|Z_1^Q,\Delta\w_h^Q} H_K^T $ 
has entries of type
\begin{align*}
  & \mathrm{E}\Big[ \big( U_{1i}^Q(Z_{1j}^Q
  -\sqrt{\frac{2}{h}}\Delta\w^j_h)
  -(Z_{1i}^Q-\sqrt{\frac{2}{h}}\Delta\w^i_h)U_{1j}^Q \big) \\ 
  &\quad \cdot
  \big( U_{1m}^Q(Z_{1n}^Q-\sqrt{\frac{2}{h}}\Delta\w^n_h)
  -(Z_{1m}^Q-\sqrt{\frac{2}{h}}\Delta\w^m_h)U_{1n}^Q \big)
  \Big|Z_1^Q,\Delta w^Q_h \Big] 
\end{align*}
for some $i,j,m,n \in \{1, \ldots, K\}$ with $i<j$ and $m<n$.
Especially, its diagonal entries are of type
\begin{equation*}
 \eta_{i}\Big(Z_{1j}
 +\sqrt{\frac{2}{h}}\Delta \w^{j}_h\Big)^2
 + \eta_{j}\Big(Z_{1i}+\sqrt{\frac{2}{h}}\Delta \w^{i}_h\Big)^2
\end{equation*}
with $(i,j) \in \mathcal{I}_A$ and $i \neq j$.
%
%
The off-diagonal entries of the matrix $H_K \Sigma^Q(V_1^Q)_{|Z_1^Q,\Delta\w_h^Q} H_K^T$ 
are of the form
\begin{align*}
  &\mathrm{E}\Big[\big(U_{1i}^Q(Z_{1j}^Q
  -\sqrt{\frac{2}{h}}\Delta\w^j_h)
  -(Z_{1i}^Q-\sqrt{\frac{2}{h}}\Delta\w^i_h)U_{1j}^Q\big)\\ & \qquad \cdot
  \big(U_{1m}^Q(Z_{1n}^Q-\sqrt{\frac{2}{h}}\Delta\w^n_h)
  -(Z_{1m}^Q-\sqrt{\frac{2}{h}}\Delta\w^m_h)U_{1n}^Q\big)\Big|Z_1^Q,\Delta w^Q_h\Big]\\
  &= \left\{ \begin{matrix*}[{l}]
            \quad 0  ,     & i,j \notin \{m,n\} \\
            \quad \eta_i \big(Z_{1j}^Q-\sqrt{\frac{2}{h}}\Delta \w_h^j\big)
            \big(Z_{1n}^Q-\sqrt{\frac{2}{h}}\Delta \w_h^n\big), & i = m, \, j \neq n \\
             -\eta_i\big(Z_{1j}^Q-\sqrt{\frac{2}{h}}\Delta \w_h^j\big)
            \big(Z_{1m}^Q-\sqrt{\frac{2}{h}}\Delta \w_h^m\big), & i=n, \, j \neq m \\
             -\eta_j\big(Z_{1i}^Q-\sqrt{\frac{2}{h}}\Delta \w_h^i\big)
            \big(Z_{1n}^Q-\sqrt{\frac{2}{h}}\Delta \w_h^n\big), & j=m, \, i \neq n \\
            \quad \eta_j\big(Z_{1i}^Q-\sqrt{\frac{2}{h}}\Delta \w_h^i\big)
            \big(Z_{1m}^Q-\sqrt{\frac{2}{h}}\Delta \w_h^m\big), & j=n, \, i \neq m 
            \end{matrix*} \right.
\end{align*}
with $i,j,m,n \in \{1,\ldots,K\}$, $i < j$ and $m < n$.
Therewith, it is easy to see that for  
$\SinftyQ = \mathrm{E}\Big[H_K \Sigma^Q(V_1^Q)_{|Z_1^Q,\Delta \w_h^Q} H_K^T 
\Big|\Delta \w_h^Q\Big]$, 
we get
\begin{align*}
  \big(\SinftyQ\big)_{(k,k)} = 2\eta_{i}\eta_{j} +\frac{2}{h}\eta_{i}(\Delta\w_h^{j})^2
  +\frac{2}{h}\eta_{j}(\Delta\w_h^{i})^2
\end{align*}
and for the off-diagonal entries, it holds
\begin{align*}
    \big(\SinftyQ\big)_{(k,l)}  = \left\{ \begin{matrix*}[l]
			   0  ,     & i,j \notin \{m,n\} \\
			  \frac{2}{h}\eta_i
			  \Delta\w_h^j\Delta\w_h^n, & i= m, \, j \neq n \\
			  - \frac{2}{h}\eta_i
			  \Delta\w_h^j\Delta\w_h^m, & i=n, \, j \neq m \\
			  -\frac{2}{h} \eta_j
			  \Delta\w_h^i\Delta\w_h^n, & j=m, \, i \neq n \\
			  \frac{2}{h} \eta_j
			  \Delta\w_h^i\Delta\w_h^m ,& j=n, \, i \neq m 
			  \end{matrix*} \right.
\end{align*}
with 
$k,l\in\{1,\ldots, L\}$, $l \neq k$,
$i,j,m,n\in\{1,\ldots,K\}$, $i < j$, and $m < n$.
Next, we employ the following lemma
from \cite{MR1843055} in order to rewrite \eqref{SqrtSigma}.
\begin{lma} \label{CholEV}
  Let $A$ and $G$ be symmetric positive definite matrices and denote the smallest eigenvalue
  of matrix $G$ by $\lambda_{min}$. Then, it holds
  \begin{equation*}
    \|A^{\frac{1}{2}}-G^{\frac{1}{2}}\|_F^2 \leq \frac{1}{\sqrt{\lambda_{min}}} \|A-G\|_F^2.
  \end{equation*}
\end{lma}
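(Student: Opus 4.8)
The plan is to read $A-G$ as the image of the unknown increment $X:=A^{1/2}-G^{1/2}$ under a Lyapunov-type linear map and then to invert that map quantitatively. The point of departure is the identity $A-G=A^{1/2}X+XG^{1/2}=:\mathcal{L}(X)$, obtained by adding and subtracting $A^{1/2}G^{1/2}$; here $\mathcal{L}$ denotes the linear operator $Y\mapsto A^{1/2}Y+YG^{1/2}$ on the space of $L\times L$ matrices, equipped with the Frobenius inner product $\langle Y,Z\rangle_F:=\tr(Y^TZ)$. Since $A^{1/2}$ and $G^{1/2}$ are symmetric, $\mathcal{L}$ is self-adjoint for this inner product; equivalently, after vectorisation it is multiplication by $I_L\otimes A^{1/2}+G^{1/2}\otimes I_L$, whose eigenvalues are the sums $\sqrt{\mu_i}+\sqrt{\nu_j}$, with $\mu_i$ and $\nu_j$ ranging over the (positive) eigenvalues of $A$ and of $G$.

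The first step I would carry out is the spectral lower bound $\langle\mathcal{L}(X),X\rangle_F\ge\sqrt{\lambda_{min}}\,\|X\|_F^2$. Expanding the inner product gives $\langle\mathcal{L}(X),X\rangle_F=\tr(X^TA^{1/2}X)+\tr(G^{1/2}X^TX)$; the first summand is nonnegative, being the trace of a product of two positive semidefinite matrices, and the second is at least $\sqrt{\lambda_{min}}\,\tr(X^TX)$ since $G^{1/2}-\sqrt{\lambda_{min}}\,I_L\succeq 0$ and, again, the trace of a product of positive semidefinite matrices is nonnegative. I would then combine $A-G=\mathcal{L}(X)$ with the Cauchy--Schwarz inequality for $\langle\cdot,\cdot\rangle_F$ to obtain $\sqrt{\lambda_{min}}\,\|X\|_F^2\le\langle A-G,X\rangle_F\le\|A-G\|_F\,\|X\|_F$, and divide by $\|X\|_F$ (the case $X=0$ being trivial) to reach $\|A^{1/2}-G^{1/2}\|_F\le\lambda_{min}^{-1/2}\|A-G\|_F$, which yields the assertion.

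The step I expect to be the main obstacle is the uniform lower bound on $\mathcal{L}$ --- equivalently, invertibility of $I_L\otimes A^{1/2}+G^{1/2}\otimes I_L$ together with the operator-norm bound $\lambda_{min}^{-1/2}$ on its inverse --- and it is exactly here that positive definiteness of $G$ enters, ensuring $\lambda_{min}>0$; everything else reduces to routine trace inequalities for products of positive semidefinite matrices. Since this lemma is quoted from \cite{MR1843055}, an alternative is simply to invoke that reference; the short self-contained derivation outlined above reproduces it.
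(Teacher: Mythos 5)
Your Lyapunov-operator argument is mathematically sound and is essentially the standard proof of this type of inequality (the paper itself does not prove Lemma~\ref{CholEV} but only cites \cite{MR1843055}): the identity $A-G=A^{1/2}X+XG^{1/2}$ with $X=A^{1/2}-G^{1/2}$, the coercivity bound $\langle\mathcal{L}(X),X\rangle_F\geq\sqrt{\lambda_{min}}\,\|X\|_F^2$ obtained from $\operatorname{tr}(X^TA^{1/2}X)\geq 0$ and $G^{1/2}-\sqrt{\lambda_{min}}\,I\succeq 0$, and the Cauchy--Schwarz step are all correct.

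The gap is in the last sentence, ``which yields the assertion.'' What you actually prove is $\|A^{1/2}-G^{1/2}\|_F\leq\lambda_{min}^{-1/2}\|A-G\|_F$, which upon squaring gives $\|A^{1/2}-G^{1/2}\|_F^2\leq\lambda_{min}^{-1}\|A-G\|_F^2$ --- not the stated bound, which pairs the factor $\lambda_{min}^{-1/2}$ with the \emph{squared} norms. That is a strictly stronger claim whenever $\lambda_{min}<1$, which is precisely the relevant regime here (the lemma is applied with $\lambda_{min}\geq 2\eta_{K-1}\eta_K$ and trace-class eigenvalues). In fact the inequality as stated cannot hold in general: under the scaling $A\mapsto cA$, $G\mapsto cG$ the left-hand side scales like $c$ while the right-hand side scales like $c^{3/2}$, and already for $1\times 1$ matrices $A=4\epsilon$, $G=\epsilon$ with $\epsilon<1/81$ one has $(\sqrt{A}-\sqrt{G}\,)^2=\epsilon>9\epsilon^{3/2}=\|A-G\|_F^2/\sqrt{\lambda_{min}}$. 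So your derivation establishes the correct (and presumably intended) version of the lemma, but you should not assert that it yields the statement verbatim; the honest conclusion is that the statement carries a misplaced exponent. Note that this is not purely cosmetic for the paper: the factor $\tfrac{1}{\sqrt{2}\eta_K}$ used after invoking the lemma in the proof of Theorem~\ref{Algo2Alternative} would become $\tfrac{1}{2\eta_K^2}$ with the corrected bound, changing the exponent of $\min_{j\in\mathcal{J}_K}\eta_j$ in that error estimate from $-\tfrac{1}{2}$ to $-1$.
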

\begin{proof}[Proof of Lemma~\ref{CholEV}]
A proof can be found in \cite[Lemma 4.1]{MR1843055}.
\end{proof}
For simplicity, we assume $\eta_1\geq\eta_2\geq\ldots\geq\eta_K$ for all $K\in\mathbb{N}$. 
We decompose $\SinftyQ$ as
\begin{equation*}
  \SinftyQ= 2\eta_{K-1}\eta_KI_{L}+\widehat{\SinftyQ}
\end{equation*}
to determine its smallest eigenvalue.
The matrix $\widehat{\SinftyQ}$ is defined as follows: For the
diagonal elements, we get values 
\begin{equation*}
  \big(\widehat{\SinftyQ}\big)_{(k,k)} = \big(\SinftyQ\big)_{(k,k)}-2\eta_{K-1}\eta_K
  = 2(\eta_i\eta_j-\eta_{K-1}\eta_K)+\frac{2}{h}\eta_i(\Delta \w_h^j)^2
  +\frac{2}{h}\eta_j(\Delta \w_h^i)^2 \geq 0
\end{equation*}
with $k \in\{1,\ldots,L\}$, $(i,j) \in \mathcal{I}_A$, and $h>0$.
For the off-diagonal elements, we get $\big(\widehat{\SinftyQ}\big)_{(k,l)}
= \big({\SinftyQ}\big)_{(k,l)}$ for all $k,l \in\{1,\ldots,L\}$, $k\neq l$.
As the matrix $\widehat{\SinftyQ}$ is symmetric and positive semi-definite, the smallest
eigenvalue $\lambda_{\min}$ of ${\SinftyQ}$ fulfills
$\lambda_{\min} \geq 2\eta_{K-1} \, \eta_K \geq 2\eta_K^2$. \\ \\
Below, we use the notation $c_D = \sum_{r=D+1}^{\infty}\frac{1}{r^2}$ for legibility. 
The matrices $\SigmaDQ$ and $\SinftyQ$
are symmetric positive definite. By Lemma~\ref{CholEV} and the
definitions of $\SigmaDQ$, $\SinftyQ$ in
\eqref{SigmaD} and \eqref{SigmaInf}, respectively,
we obtain from \eqref{SqrtSigma}
\begin{align*}
  &\mathrm{E}\bigg[\Big\|\int_t^{t+h} \Psi
  \Big(  \int_t^s \Phi \, \mathrm{d}W_r^K\Big) \,\mathrm{d}W_s^K
  - \sum_{i,j \in \mathcal{J}_K} \hat{I}_{(i,j)}^{Q,(D)}(h)
  \; \Psi \big(\Phi \tilde{e}_i, \tilde{e}_j\big)\Big\|_H^2\bigg] \\
  &\leq \frac{C h^2 c_D}{2\sqrt{2}\eta_K\pi^2}  
  \mathrm{E}\Big[\big\| \SigmaDQ-\SinftyQ \big\|_F^2\Big]\\
  &= \frac{C h^2 c_D}{2\sqrt{2}\eta_K\pi^2}
  \mathrm{E}\bigg[\Big\| c_D^{-1} 
  \sum_{r=D+1}^{\infty}\frac{1}{r^2} H_K \SigmaCondQ H_K^T
  -\mathrm{E}\Big[ H_K \Sigma^Q(V_1^Q)_{|Z_1^Q,\Delta \w_h^Q} H_K^T \Big|\Delta \w_h^Q\Big] \Big\|_F^2\bigg]\\
  &= \frac{C h^2 c_D}{2\sqrt{2} \eta_K\pi^2} \mathrm{E}\bigg[\Big\|
  c_D^{-1}
  \Big( \sum_{r=D+1}^{\infty} \tfrac{H_K \SigmaCondQ H_K^T}{r^2}
  - \sum_{r=D+1}^{\infty} \tfrac{\mathrm{E}
  \big[H_K \Sigma^Q(V_1^Q)_{|Z_1^Q,\Delta \w_h^Q} H_K^T \big| \Delta \w_h^Q\big]}{r^2} \Big)\Big\|_F^2\bigg]\\
  &= \frac{C h^2 c_D^{-1}}{2\sqrt{2} \eta_K\pi^2} \sum_{k,l=1}^L
  \mathrm{E}\Bigg[ \mathrm{E}\bigg[ \Big( \sum_{r=D+1}^{\infty}
  \tfrac{H_K \SigmaCondQ H_K^T - \mathrm{E}\big[ H_K \Sigma^Q(V_1^Q)_{|Z_1^Q,\Delta \w_h^Q} H_K^T \big| \Delta \w_h^Q\big]}{r^2}
  \Big)_{(k,l)}^2 \Big| \Delta \w_h^Q \bigg] \Bigg]
\end{align*}
for $h>0$, $t, t+h\in[0,T]$, $D,K \in\mathbb{N}$.
Following ideas from \cite{MR1843055}, we get 
\begin{align*}
  &\mathrm{E}\bigg[\Big\|\int_t^{t+h} \Psi 
  \Big( \int_t^s \Phi  \, \mathrm{d}W_r^K\Big) \,\mathrm{d}W_s^K
  - \sum_{i,j \in \mathcal{J}_K} \hat{I}_{(i,j)}^{Q,(D)}(h)
  \; \Psi \big(\Phi \tilde{e}_i, \tilde{e}_j\big)\Big\|_H^2\bigg] \\
  &\leq C \frac{h^2}{2 \sqrt{2} \eta_K \pi^2}  \Big(\sum_{r=D+1}^{\infty}
  \frac{1}{r^2}\Big)^{-1} \sum_{k,l=1}^L
  \mathrm{E}\Big[\operatorname{Var}\Big(\Big( \sum_{r=D+1}^{\infty}\frac{1}{r^2}
  H_K \SigmaCondQ H_K^T \Big)_{(k,l)} \Big| \Delta \w_h^Q\Big)\Big] \\
  &= C \frac{h^2}{2 \sqrt{2} \eta_K \pi^2} \Big(\sum_{r=D+1}^{\infty} \frac{1}{r^2}\Big)^{-1} \
  \sum_{k,l=1}^L \sum_{r=D+1}^{\infty}\frac{1}{r^4}\mathrm{E}\Big[\operatorname{Var}
  \Big( \Big(H_K \SigmaCondQ H_K^T \Big)_{(k,l)} \Big| \Delta \w_h^Q\Big)\Big] .
\end{align*}
Next, we compute the conditional expectation involved in this estimate.
We insert the expressions detailed above for $H_K \SigmaCondQ H_K^T$,
$r\in\mathbb{N}$, and $\SinftyQ$ 
and split the sum into diagonal entries
and off-diagonal elements of the matrix. 
This yields for $h>0$, $t, t+h\in[0,T]$, $D, L\in\mathbb{N}$
\begin{align}\label{EqProof1}
  &\sum_{k,l=1}^L \sum_{r=D+1}^{\infty}\frac{1}{r^4} \mathrm{E}\Big[
  \operatorname{Var}\Big( \Big(H_K \SigmaCondQ H_K^T \Big)_{(k,l)}\Big|\Delta \w_h^Q\Big)\Big]
  \nonumber \\
  &= \sum_{k=1}^L \sum_{r=D+1}^{\infty}\frac{1}{r^4}
  \mathrm{E}\bigg[ \mathrm{E}\Big[ \Big(H_K \SigmaCondQ H_K^T - \SinftyQ\Big)^2_{(k,k)}
  \Big| \Delta \w_h^Q \Big]\bigg] \nonumber \\
  &\quad +\sum_{\substack{k,l=1 \\ k\neq l}}^L \sum_{r=D+1}^{\infty}\frac{1}{r^4}
  \mathrm{E}\bigg[ \mathrm{E}\Big[ \Big(H_K \SigmaCondQ H_K^T - \SinftyQ \Big)^2_{(k,l)}
  \Big| \Delta \w_h^Q\Big]\bigg] \nonumber \\
  &= \sum_{\substack{i,j \in \mathcal{J}_K \\ i<j}} \sum_{r=D+1}^{\infty}\frac{1}{r^4}
  \bigg(\mathrm{E}\bigg[ \mathrm{E}\Big[\Big(\eta_i\Big((Z_{rj}^Q)^2 - 2Z_{rj}^Q\sqrt{\frac{2}{h}}
  \Delta\w_h^j\Big) \nonumber \\
  &\quad \quad +
  \eta_j\Big((Z_{ri}^Q)^2 - 2Z_{ri}^Q\sqrt{\frac{2}{h}}\Delta\w_h^i\Big)-2\eta_i\eta_j\Big)^2
  \Big|\Delta \w_h^Q\Big]\bigg]\bigg) \nonumber \\
  &\quad +\sum_{\substack{i,j,m,n\in\mathcal{J}_K  \\ i<j; \, m<n}} \sum_{r=D+1}^{\infty}\frac{1}{r^4}
  \bigg(\mathrm{E}\bigg[ \mathrm{E}\Big[\eta_i^2\Big( Z_{rj}^QZ_{rm}^Q - Z_{rj}^Q\sqrt{\frac{2}{h}}
  \Delta\w_h^m - Z_{rm}^Q\sqrt{\frac{2}{h}}\Delta\w_h^j\Big)^2
  \mathds{1}_{i=n}\mathds{1}_{j\neq m} \nonumber \\
  &\quad \quad + \eta_i^2 \Big(Z_{rj}^QZ_{rn}^Q - Z_{rj}^Q \sqrt{\frac{2}{h}}\Delta\w_h^n
  - Z_{rn}^Q\sqrt{\frac{2}{h}}\Delta\w_h^j\Big)^2
  \mathds{1}_{i=m}\mathds{1}_{j\neq n}\nonumber \\
  &\quad \quad + \eta_j^2 \Big(Z_{ri}^QZ_{rm}^Q - Z_{ri}^Q \sqrt{\frac{2}{h}}\Delta\w_h^m
  - Z_{rm}^Q\sqrt{\frac{2}{h}}\Delta\w_h^i\Big)^2
  \mathds{1}_{j=n}\mathds{1}_{i\neq m} \nonumber \\
  &\quad \quad + \eta_j^2\Big( Z_{ri}^QZ_{rn}^Q - Z_{ri}^Q \sqrt{\frac{2}{h}}\Delta\w_h^n
  - Z_{rn}^Q\sqrt{\frac{2}{h}}\Delta\w_h^i\Big)^2
  \mathds{1}_{j=m}\mathds{1}_{i\neq n}
  \Big| \Delta \w_h^Q\Big]\bigg]\bigg).
\end{align}
We compute the terms in \eqref{EqProof1} separately and obtain
\begin{align*}
  &\mathrm{E}\bigg[ \mathrm{E}\Big[\Big(\eta_i\Big((Z_{rj}^Q)^2
  - 2Z_{rj}^Q\sqrt{\frac{2}{h}}
  \Delta\w_h^j\Big)+
  \eta_j\Big((Z_{ri}^Q)^2 - 2Z_{ri}^Q\sqrt{\frac{2}{h}}
  \Delta\w_h^i\Big)-2\eta_i\eta_j\Big)^2
  \Big|\Delta \w_h^Q\Big]\bigg]\\
  &= \mathrm{E}\Big[ 3\eta_i^2\eta_j^2 +2\eta_i^2\eta_j^2
  -4\eta_i^2\eta_j^2 +\frac{8}{h}\eta_i^2\eta_j
  (\Delta\w_h^j)^2+3\eta_i^2\eta_j^2 -4\eta_i^2\eta_j^2 
  +\frac{8}{h}\eta_i\eta_j^2 (\Delta\w_h^i)^2
  +4\eta_i^2\eta_j^2 \Big] \\
  &= 20\eta_i^2\eta_j^2
\end{align*}
and
\begin{align*}
  &\mathrm{E}\bigg[ \mathrm{E}\Big[\eta_i^2\Big( Z_{rj}^QZ_{rm}^Q
  - Z_{rj}^Q \sqrt{\frac{2}{h}} \Delta\w_h^m
  - Z_{rm}^Q \sqrt{\frac{2}{h}}\Delta\w_h^j \Big)^2
  \mathds{1}_{i=n}\mathds{1}_{j\neq m}\Big|
  \Delta \w_h^Q\Big]\bigg]\\
  &= \mathrm{E}\Big[ \eta_i^2\Big(\eta_j\eta_m
  +\frac{2}{h}\eta_j(\Delta\w_h^m)^2+\frac{2}{h}
  \eta_m(\Delta \w_h^j)^2\Big) \mathds{1}_{i=n} \mathds{1}_{j\neq m}\Big]
  = 5\eta_i^2\eta_j\eta_m \mathds{1}_{i=n}\mathds{1}_{j\neq m}
\end{align*}
for all $i,j,m,n \in \mathcal{J}_K$ with $i< j$ and $m<n$. 
For the other terms of this type, we get similar results.
Moreover, we compute bounds for the following expressions 
\begin{align*}
  \sum_{r=D+1}^{\infty} \frac{1}{r^4}
  \leq \int_D^{\infty}\frac{1}{s^4}\,\mathrm{d}s = \frac{1}{3D^3},
  \qquad 
  \sum_{r=D+1}^{\infty}\frac{1}{r^2} 
  \geq \int_{D+1}^{\infty}\frac{1}{s^2}\,\mathrm{d}s = \frac{1}{D+1}
\end{align*}
for all $D\in\mathbb{N}$. A combination of these estimates yields
\begin{align*}
  \Big(\sum_{r=D+1}^{\infty}\frac{1}{r^4}\Big)
  \Big(\sum_{r=D+1}^{\infty}\frac{1}{r^2}\Big)^{-1}
  \leq \frac{D+1}{3D^3}\leq \frac{2}{3D^2}
\end{align*}
for all $D\in\mathbb{N}$. At this point, the main difference to Algorithm~1 
arises -- we obtain a higher order of convergence in $D$.
In total, we get 
\begin{align*}
  &\mathrm{E}\bigg[\Big\|\int_t^{t+h} \Psi 
  \Big(\Phi  \int_t^s \, \mathrm{d}W_r^K\Big) \,\mathrm{d}W_s^K
  - \sum_{i,j \in \mathcal{J}_K} \hat{I}_{(i,j)}^{Q,(D)}(h)
  \; \Psi \big(\Phi \tilde{e}_i, \tilde{e}_j\big)\Big\|_H^2\bigg] \\
  &\leq C \frac{h^2}{2 \sqrt{2} \eta_K\pi^2}  \Big(\sum_{r=D+1}^{\infty}\frac{1}{r^2}\Big)^{-1}
  \sum_{r=D+1}^{\infty}\frac{1}{r^4} \,\Bigg(
  \sum_{\substack{i,j\in\mathcal{J}_K \\ i<j}} 20\eta_i^2\eta_j^2 \nonumber \\
  &\quad +\sum_{\substack{i,j,m,n\in\mathcal{J}_K \\ i<j; \, m<n}}
  5\bigg(\eta_i^2\eta_j\eta_m \mathds{1}_{i=n}\mathds{1}_{j\neq m} +
  \eta_i^2\eta_j\eta_n \mathds{1}_{i=m}\mathds{1}_{j\neq n}
  +\eta_j^2\eta_i\eta_m \mathds{1}_{j=n}\mathds{1}_{i\neq m}
  +\eta_j^2\eta_i\eta_n \mathds{1}_{j=m}\mathds{1}_{i\neq n}\bigg)\Bigg)\\
  &\leq C \frac{h^2}{2 \sqrt{2} \eta_K\pi^2} \frac{2}{3D^2}
  \sum_{\substack{i,j\in\mathcal{J}_K \\ i<j}} \Big( 20\eta_i^2\eta_j^2
  + 10 \eta_i^2 \eta_j \sum_{\substack{m\in\mathcal{J}_K \\ m \neq j}} \eta_m
  + 10 \eta_j^2 \eta_i \sum_{\substack{m\in\mathcal{J}_K \\ m\neq i}}
  \eta_m \Big).
\end{align*}
Finally, this implies for all $h>0$, $t, t+h\in[0,T]$, $D,K\in\mathbb{N}$
\begin{align*}
  &\mathrm{E}\bigg[\Big\|\int_t^{t+h} 
  \Psi \Big(\Phi  \int_t^s \, \mathrm{d}W_r^K\Big) \,\mathrm{d}W_s^K
  - \sum_{i,j \in \mathcal{J}_K} \hat{I}_{(i,j)}^{Q,(D)}(h)
  \; \Psi \big(\Phi \tilde{e}_i, \tilde{e}_j\big)\Big\|_H^2\bigg] \nonumber \\
  &\leq C \frac{h^2}{2 \sqrt{2} \eta_K\pi^2} \frac{2}{3D^2}
  \Big(20\Big(\sup_{j\in\mathcal{J}_K} \eta_j \Big)^2\big(\tr Q\big)^2
  + 20 \Big(\sup_{j\in\mathcal{J}_K} \eta_j\Big)
  \big(\tr Q\big)^3 \Big) 
  \leq C_Q \frac{h^2}{\eta_K D^2},
\end{align*}
that is, more generally,
\begin{align*}
  &\mathrm{E}\bigg[\Big\|\int_t^{t+h} 
  \Psi \Big(\Phi  \int_t^s \, \mathrm{d}W_r^K\Big) \,\mathrm{d}W_s^K
  - \sum_{i,j \in \mathcal{J}_K} \hat{I}_{(i,j)}^{Q,(D)}(h)
  \; \Psi \big(\Phi \tilde{e}_i, \tilde{e}_j\big)\Big\|_H^2\bigg]
  \leq C_Q \frac{h^2}{\big(\min_{j\in\mathcal{J}_K}\eta_j\big) D^2}.
\end{align*} 
The statement of the theorem follows by combing this estimate with \eqref{W-WK}.
\end{proof}
\bibliographystyle{plain} 
\bibliography{literatur}
\end{document}